\newcommand{\mb}[1]{\mathbf{#1}}
\newcommand{\bb}[1]{\mathbb{#1}}
\newcommand{\pard}[2]{\frac{\partial #1}{\partial #2}}
\newcommand{\ddt}[1]{\frac{d #1}{dt}}
\newcommand{\mbf}[1]{\mathbf{#1}}
\newcommand{\mbb}[1]{\mathbb{#1}}
\newcommand{\ip}[2]{\left \langle #1 , #2 \right\rangle}
\newcommand{\size}[1]{\left | #1  \right|}
\newcommand{\ov}{\overline}
\newcommand{\ti}{\tilde}
\newcommand{\n}{\nabla}
\renewcommand{\l}{\lambda}
\newcommand{\ra}{\rightarrow}
\newcommand{\Si}{\Sigma}
\newcommand{\e}{\epsilon}
\newcommand{\ho}{\left(\frac{d}{dt} - \Delta\right)}
\newcommand{\mh}{\mathfrak{H}}
\newcommand{\mv}{\mathfrak{v}}
\begin{document}

\theoremstyle{plain}

\newtheorem{theorem}{Theorem}

\newtheorem{lemma}[theorem]{Lemma}

\newtheorem{claim}[theorem]{Claim}

\newtheorem{prop}[theorem]{Proposition}

\newtheorem{cor}[theorem]{Corollary}

\theoremstyle{definition}

\newtheorem{assumption}{Condition}

\newtheorem{definition}{Definition}

\newtheorem{example}{Example}

\theoremstyle{remark}
 
\newtheorem{remark}{Remark}

\title{Construction of Maximal Hypersurfaces with Boundary Conditions}

\author{Ben Lambert}

\email{benjamin.lambert@uni-konstanz.de}

\maketitle

\begin{abstract}
We construct maximal hypersurfaces with a Neumann boundary condition in Minkowski space via mean curvature flow. In doing this we give general conditions for long time existence of the flow with boundary conditions with assumptions on the curvature of a  Lorentz boundary manifold. 
\end{abstract}

\section{Introduction and notation}
In this paper we use Mean Curvature Flow (MCF) with a Neumann boundary condition to construct maximal hypersurfaces with boundary in Minkowski space $\bb{R}^{n+1}_1$ for $n\geq2$, which are perpendicular to a given Lorentz surface, $\Sigma$ at their boundary. Maximal surfaces  are well known to be useful in the study of semi-Riemannian manifolds and mathematical relativity. A famous example in which these surfaces play a central part is the first proof of the positive mass conjecture by Schoen--Yau \cite{SchoenYau}. Correspondingly the existence and properties of such surfaces have been well studied, and we do not give a full literature review here. We mention Bartnik \cite{BartnikEntire}, for existence of entire maximal hypersurfaces in asymptotically flat spacetimes, Bartnik and Simon \cite{BartnikSimon} where solvability of the Dirichlet problem in Minkowski space was proven, and Gerhardt \cite{GerhardtHSurfaces} for the existence of foliations of constant mean curvature and the solvability of the Dirichlet problem in curved spacetimes. Ecker and Huisken \cite{EckerHuiskenCMCMink} first used a parabolic prescribed mean curvature flow to construct surfaces of prescribed mean curvature, and the assumptions on ambient manifolds for such flows have been weakened by Gerhardt \cite{GerhardtHSurfacesthesequal}. Conditions for construction of constant mean curvature surfaces in Minkowski space by a perscribed mean curvature flow in the noncompact case has been studied by Aarons\cite{Aarons}. The Dirichlet boundary problem for MCF in spaces of indefinite metric has been considered by Ecker \cite{EckerMinkowskiDBC, EckerNull}.

Neumann boundary conditions for MCF in Euclidean space have been studied in various situations, and many tools of classical MCF singularity analysis now have a Neumann boundary condition couterpart, see for example the works of Stahl \cite{Stahlfirst}\cite{Stahlsecond}, Buckland \cite{Buckland} and Edelen \cite{Edelen}. Graphical Euclidean MCF with a perpendicular Neumann boundary condition has also been studied over compact domains by Huisken \cite{Huiskengraph}, and over halfspaces by Wheeler \cite{WheelerHalfspace} and in the rotationally symmetric case by Wheeler \cite{WheelerRotSym}. Graphs over Killing vector fields have also been considered by Lira and Wanderly \cite{LiraWanderly} and also the author \cite{LambertTorus}. Mixed Neumann and Dirichlet boundary conditions have also been considered by Wheeler and Wheeler \cite{WheelersDirichletNeumann}, see also the rotational case by Wheeler \cite{WheelerRotSym}. MCF with a Neumann boundary condition in Minkowski space has also been investigated by the author in the standard graphical case \cite{LambertConvex} and within a cone boundary manifold \cite{LambertMinkowski}.  

We require two properties of MCF to construct our maximal hypersurfaces, firstly that the flow stays in a bounded region of Minkowski space, and  secondly that the flowing hypersurface remains strictly spacelike (which then implies the flow exists for all time). The first of these may be achieved by assuming the existence of suitable comparison solutions. The second requirement will be proven in the form of a gradient estimate (for similar estimates, see for example \cite{BartnikEntire, EckerHuiskenCMCMink,GerhardtHSurfacesthesequal,EckerHuiskengraphentire,EckerHuiskenInteriorEstimates}) under a curvature assumption on the boundary manifold, which in dimension 2 is akin to mean convexity. We remark that the flow is still interesting in the absence of some of these assumptions, for example, we may get convergence to homothetic solutions (see  \cite{LambertMinkowski}), and that the estimates in this paper may still be of interest in some such situations. If the flow remains in a bounded region, then for any sequence of times  we may find a subsequence $t_i$ such that $M_{t_i}$ converges to a maximal surface. To obtain better convergence, for example convergence of the whole flow, we need to assume that the maximal surface is stable under the flow, see the final section of this paper for a discussion of stability issues.

Suppose $\Sigma\subset \bb{R}^{n+1}_1$ is a semi-Riemannian hypersurface with a spacelike unit normal $\mu$ and $M^n$ a compact manifold with boundary $\partial M$. We suppose we are given $\mb{F}_0:M^n \ra \bb{R}^{n+1}_1$, an initial spacelike embedding of $M^n$ such that $\mb{F}_0(\partial M^n)\subset \Sigma$. Let $\mathbf{F}: M^n \times [0,T) \rightarrow \bb{R}^{n+1}_1$ be such that
\begin{equation}
\label{MCF}
\begin{cases}
\frac{d \mathbf{F}}{dt} = \mathbf{H}= H \nu & \forall (x,t) \in M^n \times [0,T]\\
\mathbf{F}(\cdot,0)= \mb{F}_0&\\
\mathbf{F}(x,t) \subset \Sigma & \forall (x,t) \in \partial M^n \times [0,T]\\
\ip{\nu}{\mu \circ \mb{F}}(x,t)=0 & \forall (x,t) \in \partial M^n \times [0,T]\ \ ,\\
\end{cases}
\end{equation}
then $\mathbf{F}$ moves by \emph{Mean Curvature Flow with a Neumann free boundary condition $\Sigma$} (here $\nu(x,t)$ is the normal to $\mbf{F}$ at time $t$, and $H$ is the mean curvature with respect to $\nu$).  As is standard, we will write $M_t$ for the image of $F(\cdot, t)$. From here onwards we will assume that $\Sigma$ is topologically a cylinder, and $M^n$ is topologically a $n$-ball. We will also assume that $\mb{F}_0$ satisfies the compatibility condition that at the boundary $\ip{\nu|_{t=0}}{\mu \circ \mb{F}_0} =0$.  

We will need various geometric quantities on various manifolds. A bar will imply quantities on $\mbb{R}^{n+1}_1$, for example $\overline \Delta, \overline \nabla, \ldots$ and so on; no extra markings $\Delta, \nabla, \ldots $ will refer to geometric quantities on $M_t$ our flowing surface at time $t$ and for any other manifold $Z$ $ \Delta^Z,  \nabla^Z, \ldots \text{etc.}$ will refer to the Laplacian, covariant derivatives, $\ldots$  on $Z$.

We state the main theorem of this paper:
\begin{theorem}
 Suppose that $\Sigma$ satisfies Conditions \ref{cassump} and \ref{uassump} below, and $\mb{F}_0$ is a smooth, spacelike, compatible initial embedding. Suppose there exist comparison solutions such that the flowing hypersurface $M_t$ remains in a compact region of $\bb{R}^{n+1}_1$. Then a solution to (\ref{MCF}) exists for $T=\infty$ which is smooth with uniform bounds on all derivatives. Furthermore there exists a sequence $t_i \ra \infty$ such that $M_{t_i}\ra M_\infty$ where $M_\infty$ is a smooth maximal surface satisfying the boundary condition. If for all $p\in\partial M_\infty$, $A^\Si(\nu^\infty, \nu^\infty)|_p>0$ then the whole flow converges to $M_\infty$ in the sense that $M_t \ra M_\infty$ smoothly as $t\ra \infty$.
\end{theorem}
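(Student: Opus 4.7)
The plan is to follow the standard structure for geometric flows with boundary: short-time existence, a priori estimates (containment, gradient, curvature, higher derivatives), long-time existence, then compactness to extract a limit. Short-time existence for \eqref{MCF} is standard since it is a quasilinear parabolic Neumann problem on a compact domain, and the $C^0$ containment is supplied directly by the hypothesis that $M_t$ stays in a compact region (i.e.\ via the assumed comparison barriers).

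The principal step, and where I expect the main obstacle to lie, is a uniform bound on $v := -\ip{\nu}{T}$ for a fixed future-directed timelike unit vector $T\in\bb{R}^{n+1}_1$; such a bound is equivalent to keeping $M_t$ uniformly spacelike. First I would compute the evolution $\ho v$ and derive a schematic inequality of the form $\ho v \leq -|A|^2 v + \text{(lower order)}$ in the interior. At $\partial M_t$, differentiating the Neumann condition $\ip{\nu}{\mu}=0$ in the $\mu$ direction produces a boundary term involving $A^\Si(\nu,\nu)$, and Condition \ref{cassump} will be precisely the sign/size assumption needed for the maximum principle to close and yield a uniform upper bound on $v$.

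With strict spacelikeness in hand, $|A|^2$ bounds follow from the standard evolution computation of $\ho |A|^2$ along MCF in $\bb{R}^{n+1}_1$, the boundary contribution again being controlled by the geometry of $\Si$ (Condition \ref{uassump}) together with the gradient estimate; iterating this via Schauder and Krylov--Safonov theory for oblique derivative parabolic problems gives uniform bounds on $\n^k A$ for every $k$. Long-time existence is then immediate. Because $M_t$ stays in a compact region and $\frac{d}{dt}\mathrm{Vol}(M_t) = -\int_{M_t} H^2\,d\mu_t$, one has $\int_0^\infty \int_{M_t} H^2\,d\mu_t\,dt < \infty$, so $\int_{M_t}H^2\,d\mu_t \ra 0$ along some sequence. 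Arzel\`a--Ascoli applied to the uniform higher derivative bounds then extracts $t_i \ra \infty$ and a smooth limit $M_\infty$ which is maximal and respects the Neumann condition.

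For the final statement, $A^\Si(\nu^\infty,\nu^\infty)>0$ on $\partial M_\infty$ is a strict stability condition for $M_\infty$ as a free boundary critical point of area: the Jacobi operator on $M_\infty$ with the induced Robin boundary condition is coercive, with a positive spectral gap. I would then argue by linearisation in a tubular neighbourhood of $M_\infty$ (writing $M_t$ as a graph of a function $w$ over $M_\infty$ for $t$ large, and showing $w$ satisfies a strictly parabolic equation with Robin boundary condition whose linearisation has strictly negative principal eigenvalue), or equivalently via a \L{}ojasiewicz--Simon inequality for the area functional with free boundary; either route upgrades the subsequential convergence $M_{t_i}\ra M_\infty$ to smooth convergence of the whole flow, giving the last claim.
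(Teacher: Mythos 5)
The skeleton of your argument (short-time existence, containment by hypothesis, gradient estimate as the crux, then curvature and Schauder estimates, compactness, and a stability argument for full convergence) matches the paper's, but the step you yourself identify as the principal one is where your proposal has a genuine gap, in two respects. First, the choice of gradient function: with $v=-\ip{\nu}{T}$ for a \emph{fixed} timelike $T$, the boundary derivative $\n_\mu v$ has no sign — differentiating the Neumann condition only gives $\n_\mu v = A^\Sigma(\nu, T^\top)$, which for a generic $T$ can only be estimated by $Cv^3$, and the paper remarks explicitly that such a term overpowers the interior evolution. The point of Condition \ref{cassump} is that it supplies a vector field $V$ \emph{adapted to $\Sigma$} (an eigenvector of $A^\Sigma$ with $\n_\mu V=0$), for which Lemma \ref{boundv} gives $\n_\mu v = -v[A^\Sigma(\nu,\nu)-A^\Sigma(V,V)]\le 0$; Condition \ref{uassump} is then needed to compare this $v$ with the graphical gradient $\hat v$. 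Second, even with the correct $v$, the maximum principle does not close as you describe: to absorb the cross terms coming from the multiplier $e^{\lambda u}$ one must trade $|A|^2$ for $|\n v|^2/v^2$ via the Bartnik-type inequality, and this leaves a $+H^2$ term in $\ho(ve^{\lambda u})$ whose boundary derivative satisfies only $\n_\mu H^2 = -H^2A^\Sigma(\nu,\nu)$, which is \emph{positive} when $A^\Sigma(V,V)<0$. The paper's resolution — and the technical heart of Section \ref{GE} — is Proposition \ref{Hest}, the estimate $\sup|H|\le C_1+C_2\,\mv^{p}$ with $p<1$, proved by a Stampacchia iteration using $\int_0^T\int_M H^2\,dVdt\le C$ (which follows from the volume bound, noting that in Minkowski space $\ddt{}\int_M dV = +\int_M H^2 dV$, the opposite sign to the one you wrote) together with the bespoke Sobolev and trace inequalities of Lemma \ref{intlemmas}, needed because there is no Michael--Simon inequality in Minkowski space. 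Only with $p<1$ does the term $\mv^{2p}-v^2$ become negative at a large maximum of $v$, closing the argument in Theorem \ref{Gradest}. Your proposal contains none of this machinery, and as written the maximum principle for $v$ alone would fail.

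On the final claim, your route (coercivity of the Jacobi operator with Robin boundary condition under $A^\Sigma(\nu^\infty,\nu^\infty)>0$, then linearisation or \L{}ojasiewicz--Simon) is plausible but is not what the paper does, and it needs care: the paper explicitly warns that variational stability does \emph{not} imply stability under the flow (the convex cylinder example, where $A^\Sigma(\nu,\nu)=0$ and there is a continuum of maximal planes), so the strict boundary inequality is doing real work and the spectral gap claim would have to be established, not asserted. The paper's argument is more elementary: Corollary \ref{Stabcondition2} shows that $\phi = R-|x-a|^2$ satisfies the differential inequality (\ref{Stabcondition}) when $A^\Sigma(\nu,\nu)>0$, which produces a local foliation near $M_\infty$ by surfaces with a strict sign on $H$; solving an ODE turns these into comparison solutions in the sense of (\ref{CompSol}), and Proposition \ref{Comparison} then traps the flow between barriers converging to $M_\infty$. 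This barrier argument buys a self-contained proof using only the comparison principle already established in Section \ref{CS}, at the cost of being specific to the sign structure of maximal surfaces, whereas a \L{}ojasiewicz--Simon argument, if carried out, would also give a convergence rate.
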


The Theorem is proven as follows: In Section \ref{RP} we show that the above flow is equivalent to a quasilinear PDE, which leads to short time existence for the flow, Proposition \ref{shorttime}, and indicates that the key to obtaining the long time existence above is a suitable gradient estimate. In Section \ref{CS} we determine what constitutes a comparison solution with boundary conditions, see equation (\ref{CompSol}) and Proposition \ref{Comparison}. In Section \ref{EE} we collect the necessary evolution equations and boundary derivatives. In Section \ref{GE} we use an iteration argument to prove suitable estimates on the mean curvature culminating in Proposition \ref{Hest}. We then use Proposition \ref{Hest} to prove the gradient estimate, Theorem \ref{Gradest}, which demonstrates that the above flow exists for all time and is uniformly smooth, see Corollary \ref{boundedflow}. In Section \ref{CT} we prove sequential convergence and construct comparison solutions to give conditions for stability of maximal surfaces under MCF, which then give convergence criteria for the whole flow, see Lemma \ref{FullStab} and Corollary \ref{Stabcondition2}.

In the case that the flow does not stay in a compact region, we may still use the estimates obtained to infer long time existence, see for example Corollary \ref{unboundedflow} which states that if $\Si$ satisfies Conditions \ref{cassump} and \ref{uassump} below, either a solution to (\ref{MCF}) exists for $T=\infty$, or its graph function becomes unbounded in finite time. 
 
We must assume some bounds on the geometry of $\Sigma$. In the absence of any such assumptions we may construct the following example of singular behaviour: In $\bb{R}^2_1$ we parametrise a ``death's trumpet'' boundary manifold $\Sigma$ given graphically by $y=\log \sinh |x|$. $\Si$ has been chosen so that the Minkowski equivalent of the grim reaper solution to MCF given by $u(x,t)= \log \cosh x +t$ is perpendicular to $\Si$ at all $(x,t)$ such that $u$ and $\Si$ intersect. Then starting at any negative time the grim reaper gives the solution to (\ref{MCF}) in Figure \ref{deathstrumpet}.
\begin{figure}[h!]
\begin{center}
 \label{deathstrumpet}
\end{center}
\includegraphics[scale=0.3]{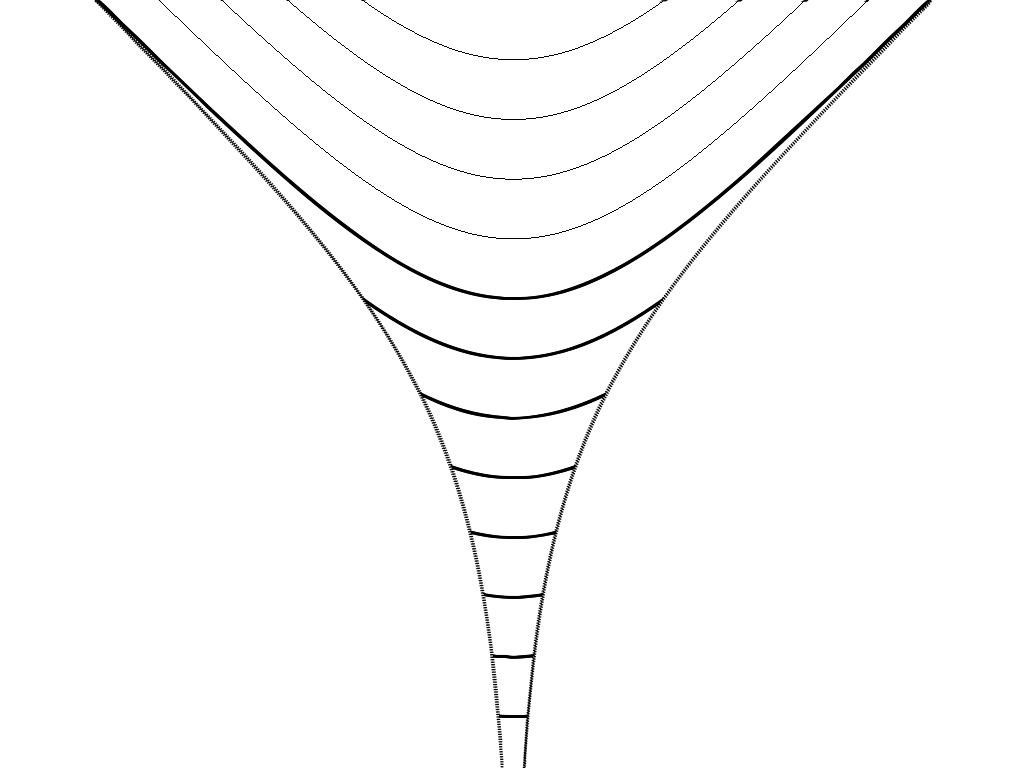}
\caption{Grim reaper solution moving inside the death's trumpet boundary.}
\end{figure}
At time $t=0$ we see that this solution is tangent to the light cone at infinity, and the Neumann boundary condition is no longer defined. We are able to continue the flow for $t>0$ on the interior but we no longer have a boundary to speak of and the flowing manifold is no longer strictly spacelike.

We now define our curvature conditions on $\Si$. We agree that the signs on the second fundamental form on $M_t$ and $\Si$ are given by $A(V,W) = \ip{\ov \n_V \nu }{W}$ and $A^\Si(V,W) = \ip{\ov \n_V \mu }{W}$ respectively. We will also sometimes write $A(\pard{\mb{F}}{x^i}, \pard{\mb{F}}{x^j}) = h_{ij}$, and $|A|$ for the tensor norm of $A(\cdot, \cdot)$. One possible condition we could impose on $\Sigma$ is convexity, and this immediately allows application of a maximum principle to get a spacelike flow, but is extremely restrictive in terms of allowed $\Sigma$. Instead we assume the following weaker curvature conditions:

\begin{assumption}[Curvature assumptions on $\Sigma$]\label{cassump}
The curvature of $\Sigma$ is uniformly bounded and there exists a smooth timelike unit vector field $V$ on $\bb{R}^{n+1}_1$, such that for all $p\in \Sigma$
\begin{enumerate}
 \item $V(p)\in T_p\Si$, 
 \item $V(p)$ is an eigenvector of the second fundamental form of $A^\Sigma(\cdot, \cdot)|_p$, and
 \item $\ov\n_\mu V|_p=0$ .
\end{enumerate}
At a point $p\in\Sigma$, let $W_I$ for $1\leq I \leq n-1$ be the remaining (spacelike) eigenvectors of $A^\Sigma(\cdot,\cdot)$. We assume that for $1\leq I\leq n-1$ the curvature satisfies
\[A^\Si(W_I, W_I)+A^\Si(V, V) \geq 0\ \ .\]
\end{assumption}

This allows significantly more varied boundary manifolds than a convexity assumption, and is similar to 2-convexity. 

We define $\hat{\Si}$ to be the open region of $\bb{R}^{n+1}_1$ such that $\partial \hat \Si = \Si$ and $\mu$ points out of $\hat \Si$. We will require coordinates on $\hat{\Si}$:

\begin{definition} 
We define a smooth diffeomorphism $\ov F :\Omega\times\bb{R}\rightarrow \hat{\Sigma} \subset \bb{R}_1^{n+1}$, where $\Omega\subset\subset \bb{R}^n$ is open and bounded with smooth boundary $\partial \Omega$, to be a \emph{spacelike foliation compatible with the boundary} if:\label{intro}
\begin{enumerate}
 \item The image of $\partial \Omega\times \bb{R}$ under $\ov{F}$ is $\Sigma$.
 \item Let $x^i, \ i=1, \ldots, n$ be coordinates on $\Omega$ and let $\lambda$ parametrize $\bb{R}$ then we assume $\ip{\pard{\ov{F}}{\l}}{\pard{\ov F}{x^i}}=0$ and that $\ov F(\cdot, \lambda)$ is a spacelike hypersurface with normal in the timelike direction $\pard{\ov F}{\l}$, and there exists a uniform constant $C_{\ov F}>0$ such that $-\size{\pard{\ov F}{\l}}^2>C_{\ov F}>0$. 
\item  If $\gamma$ is the outward unit normal to $\partial \Omega$ then $\gamma^i \pard{\ov F}{x^i}$ is in the direction $\mu$.
\item All geometric quantities on the hypersurfaces $\ov F (\cdot, \l)$, for example positivity of the metric and bounds on the curvature, may be uniformly bounded in $\l$.
\end{enumerate}
\end{definition}
In section \ref{CT} we explicitly calculate examples of compatible foliations in the case that $\Si$ is rotationally symmetric.

Given a compatible foliation as above, one may construct the smooth time function $\tau:\bb{R}^{n+1}_1\ra \bb{R}$ defined by $\tau(y) = P(\ov{F}^{-1}(y))$ where $P: \Omega \times \bb{R} \rightarrow \bb{R}$ is the standard projection. Such a $\tau$ satisfies $\ov \n_\mu \tau = 0$ on $\Sigma$, and in fact $\ov{\n} \tau = \size{\pard{\ov F}{\l}}^{-2}\pard{\ov F}{\l}$. We will write the lapse function $\psi= \sqrt{-\size{\pard{\ov{F}}{\l}}^2}$. 

For any compatible spacelike foliation, we define the normal vector field \[\hat{V}:=\psi^{-1}\pard{\ov F}{\l}\ \ .\] 

\begin{assumption}[Existence of a compatible foliation]\label{uassump}
There exists a spacelike foliation compatible with the boundary such that there exists a constant $C_V>1$ such that $1\leq|\ip{V}{\hat{V}}|\leq C_V$, where $V$ is the unit vector field from Condition \ref{cassump}.
\end{assumption}

We define two notions of gradient, $v=-\ip{V}{\nu}$ and $\hat{v}=-\ip{\hat{V}}{\nu}$, where we choose signs on $V$ and $\hat{V}$ such that these functions are both positive. 
\begin{remark}\label{remarkablev}
Due to the above condition, it is easy to see that there exists a $\tilde{C}_V$ depending only on $C_V$ such that
\[\frac{1}{\tilde{C}_V }v\leq \hat{v} \leq \tilde{C}_V v\ \ .\]
\end{remark}

\begin{remark}\label{Tensorineq}
 We observe that as in \cite[Equation (3)]{EckerHuiskenCMCMink} if $T$ is any $p$-tensor defined on $\bb{R}^{n+1}_1$ and $T_M$ is the restriction of $T$ to $M_t$, we may estimate $\left|T_M\right|\leq v^p|T|_{\bb{R}_1^{n+1}}$. 
\end{remark}

To obtain a good gradient estimate in settings where the flow does not stay in a bounded region, we will also consider:
\begin{assumption}[Boundedness of maximum volume]\label{finitearea}
The maximum volume of a spacelike hypersurface with boundary on $\Sigma$ is bounded above by $C_{\text{vol}}< \infty$. 
\end{assumption}

Due to the spacelikeness of $\mu$, Condition \ref{finitearea} automatically holds while the flow stays in a bounded region. However this means that for $\Sigma$ which are tangent to cones at infinity our gradient estimate in Theorem \ref{Gradest} gets worse as the solution moves towards spatial infinity.

\begin{remark}
We note that the counter example in Figure \ref{deathstrumpet} violates both Conditions \ref{cassump} and \ref{finitearea}.
\end{remark}

The author would like to thank the reviewer for their useful comments and suggestions.

\section{Rewriting the problem}\label{coordinates}
\label{RP}
We consider coordinates given by $\ov F$, a compatible spacelike foliation as in the previous section. Writing $i$ for the $x^i$th coordinate on $\Omega$ and $\hat{g}_{ij}(\lambda)$ is the metric of the hypersurface defined by $\ov F(\cdot, \lambda)$ we obtain $ \ov{g}_{ij}=\hat{g}_{ij}(\lambda), \ \ov{g}_{i\l}=0$ and \linebreak$\ov{g}_{\l\l}= -\psi^{2}<0$. We now write a general spacelike hypersurface $\tilde{M}\subset\hat{\Sigma}$ graphically where we parametrise $\tilde M$ by $G(x)=\ov{F}(x, \phi(x))$. We then have that the metric and its inverse are given by
\[g_{ij} = \hat{g}_{ij} -\psi^2D_i \phi D_j \phi, \text{ and } g^{ij}=\hat{g}^{ij} + \hat{v}^{2}\psi^2 D_p\phi \hat{g}^{pi} D_q \phi \hat{g}^{qj}\ \ ,\]
where $\hat{v}^{-1}=\sqrt{1-\psi^2 D_r\phi \hat{g}^{rs}D_s\phi}$. The gradient quantity $\hat{v}$ is the same as in the previous section, i.e. $\hat{v}=-\ip{\hat{V}}{\nu}$.  We calculate the volume form to be
\begin{equation} \sqrt{\det{g_{ij}}}=\sqrt{\hat{v}^{-2}\det{\hat{g}_{ij}(x, \phi(x))}}\ \ ,
 \label{volformeq}
\end{equation}
and note that the ``future directed'' (that is in the same direction as $\pard{\ov F}{\l}$) unit normal may be written as
\[\nu=\hat{v}\left[\psi D_k\phi \hat{g}^{kp}\pard{\ov F}{x^p}+ \psi^{-1}\pard{\ov F}{\l}\right]\ \ .\]

Any function $f$ on $M$ may also be written as a function on $\Omega$. As such we may calculate that 
\begin{equation}\size{\n f}^2 = D_i f \hat{g}^{ij} D_j f +\psi^2 \hat{v}^2(D_if \hat{g}^{ij} D_j \phi)^2\geq  D_i f \hat{g}^{ij} D_j f \geq C|Df|^2
 \label{Dfeq}
\end{equation}
where $C$ depends only on $\ov F$. We use this to obtain integral estimates, which are necessary since to the author's knowledge there is no equivalent of the Michael--Simon Sobolev inequality in Minkowski space. We obtain boundary and Sobolev inequalities on our flowing manifold by simply using the Euclidean equivalents on $\Omega$. Of course these estimates are not coordinate invariant and so include factors of $v$, but they are sufficient for our purposes. 

\begin{lemma}\label{intlemmas}
Suppose $\Sigma$ satisfies Condition \ref{uassump}. Let $f\in C^1$ be a positive function on a spacelike hypersurface $\ti M$ inside $\hat{\Sigma}$ with $\partial \ti M \subset \Sigma$ such that at the boundary $\ip{\nu_{\tilde{M}}}{\mu}=0$. Then we may estimate 
\[ \int_{\partial \ti{M}} f dV^{\partial}  \leq  C_2\int_{\ti M} |\n f| +f(|A|+ \hat{v})dV \] 
and 
\[\left(\int_{\ti M} |f|^\frac{n}{n-1}dV\right)^\frac{n-1}{n} \leq C_1 \underset{x\in \ti M} \sup \hat{v} \int_{\ti M} |\n f| +|f| dV\]
for constants $C_1, C_2$ depending only on $n$, $\Sigma$ and $\ov{F}$.
\end{lemma}
\begin{proof}
We consider the hypersurface $\ti M$ written graphically as $\ov{F}(x, \phi(x))$ and write $C_n$ for any constant that depends only on $\Sigma$, $\ov F$, $n$. From properties of a compatible foliation, we have $\mu = S(x,\l) \gamma^i \pard{\ov F}{x^i}$, where $S(x,\l)>0$, so that the boundary condition on $\partial M$ becomes 
\begin{equation}0=\ip{\nu}{\mu} = vS\psi\gamma^k D_k\phi\ \ ,
 \label{boundaryphi1}
\end{equation}
 that is, $\gamma^kD_k\phi=0$. Under such a condition we may see that the boundary volume form on $\partial M$ may be written in $\ov F$-coordinates as $\hat{v} \sqrt{\det\hat{g}^\partial_{ij}(x, \phi)}$. 

We use Remark \ref{Tensorineq} to estimate 
\begin{equation}|\n \hat{v}|^2\leq C_n(|A|^2\hat{v}^2+\hat{v}^4)\ \ .\label{gradv}
\end{equation}
We may now apply \cite[Lemma 1.4]{Gerhardt} on $\Omega$ to see that, for $0\leq f\in C^1(M)$
\begin{flalign*}
 \int_{\partial M} f dV^{\partial} & \leq C_n \int_{\partial \Omega} \frac{f}{\hat{v}} dS\\
&\leq C_n \int_\Omega \left[|Df| +f\frac{|D\hat{v}|}{\hat{v}} + f\right] \frac{1}{\hat{v}}dx\\
&\leq C_n\int_M |\n f| +f(|A|+ \hat{v})dV\ \ ,
\end{flalign*}
where we estimated using (\ref{Dfeq}) and (\ref{gradv}).

For the second inequality we may use the uniform boundedness of $\det{\hat{g}_{ij}}$, equations (\ref{volformeq}) and (\ref{Dfeq}), to see that for $f\in C^1(\Omega)$ 
\begin{flalign*}
 \left(\int_M |f|^\frac{n}{n-1}dV\right)^\frac{n-1}{n}&\leq C_n \left(\int_\Omega |f|^\frac{n}{n-1}dx\right)^\frac{n-1}{n}\\
&\leq C_n \int_\Omega|Df| +|f|dx\\
&\leq C_n \underset{x\in M} \sup \hat{v} \int_M |\n f| +|f| dV\ \ .
\end{flalign*}
where the second inequality follows from \cite[Lemma 1.1 and Lemma 1.4]{Gerhardt}.
\end{proof}

\begin{remark}
Using Remark \ref{remarkablev} we see that Lemma \ref{intlemmas} still holds if we exchange $\hat{v}$ for $v$ (although with different constants).
\end{remark}

We now add a time dependence so that $G(x, t)=\ov{F}(x, \phi(x, t))$, and rewrite mean curvature flow in terms of $\phi(x,t)$. Standard calculations and equation (\ref{boundaryphi1}) then imply that (as in \cite[Section 2]{Stahlfirst}) equation (\ref{MCF}) is equivalent to finding $\phi:\Omega\times[0,T) \ra \bb{R}$ such that
%Easy calculations give
%\[h_{ij} =  \hat{v}\psi D^2_{ij}\phi + \tilde{b}_{ij}(x, \phi, D\phi)\ \ ,\]
%while the (reparametrized) mean curvature flow we have
%\begin{equation}-H = \ip{\ddt{}\ov F(x, \phi(x,t))}{\nu} = \ip{\pard{\ov F}{\l}D_t \phi}{\nu}=-\hat{v}\psi D_t \phi\ \ .
% \label{arbitraryname}
%\end{equation}
%Therefore, using equations (\ref{boundaryphi1}) and (\ref{arbitraryname}), we see that (as in \cite[Section 2]{Stahlfirst}) equation (\ref{MCF}) is equivalent to finding $\phi:\Omega\times[0,T) \ra \bb{R}$ such that
\begin{equation}
 \label{graphMCF}
\begin{cases}
 D_t \phi = g^{ij}(x,\phi,D\phi)D^2_{ij} \phi + b(x,\phi,D\phi)& \text{for } (x,t)\in \Omega \times [0,T)\\
 \gamma^i D_i \phi = 0 & \text{for } (x,t)\in \partial\Omega \times [0,T)\\
\phi(\cdot, 0) = \phi_0(\cdot)&\qquad\qquad\qquad\qquad\qquad\ \ .
\end{cases}
\end{equation}
where $\phi_0:\Omega \ra \bb{R}$ is chosen such that $\ov F(x, \phi_0(x))$ parametrises $M_0$. We remark that equation (\ref{graphMCF}) is a quasilinear parabolic equation, and the main challenge to show long time existence will be to show that it is uniformly parabolic. From the explicit form of $g^{ij}$ above, as is standard in graphical MCF \cite{EckerHuiskenCMCMink,EckerMinkowskiDBC,Huiskengraph,LambertMinkowski,BartnikEntire,BartnikSimon}, this is equivalent to finding an upper bound on the quantity $\hat{v}$, or from Remark \ref{remarkablev} on the quantity $v$. We obtain the following:
\begin{prop}\label{shorttime}
 Suppose $\Si$ has a compatible spacelike foliation and $\mb{F}_0$ is smooth, compatible initial data. Then there exists an $\epsilon>0$ such that a smooth solution to (\ref{MCF}) exists for $T=\epsilon$.
\end{prop}
\begin{proof}
 From the above argument, the statement is equivalent to the existence of a solution $\phi:\Omega\times[0, \epsilon) \ra \bb{R}$ to equation \ref{graphMCF}. Since this is a quasilinear equation with a \emph{linear} boundary condition, this is covered by the standard theory, for example by a trivial modification of \cite[Theorem 8.2, p206]{Lieberman}.
\end{proof}

\section{Comparison solutions}\label{CS}
Throughout this section, let $\Omega\subset \bb{R}^n$ be a domain with smooth boundary $\partial \Omega$. Let $\mb{G}$ be a smooth mapping $\mb{G}:\Omega \times [0,T)\rightarrow \bb{R}^{n+1}_1$ such that $\mb G(\partial \Omega, \cdot)\subset \Si$. Define $N_t$ to be the image of $\mb{G}(\cdot,t)$ where we will assume throughout that $N_t$ is spacelike. Then $\mb{G}$ is a comparison solution from below (above) if for any solution  $M_t$ of (\ref{MCF}) such that $M_0$ is above (below) $N_0$, $M_t$ is above (below) $N_t$ for all $t\in[0,T)$.

Suppose $M_t$ is above $N_t$ and let $\nu_G$ be the upward unit normal of $N_t$%which points in the direction of $M_t$
, and let $H^G$ be the mean curvature calculated with respect to $\nu^G$. We aim to show that if $\mb{G}$ satisfies
\begin{equation}
 \label{CompSol}
\begin{cases}
\ip{\frac{d \mathbf{G}}{dt}}{\nu^G} \geq -H^G  & \forall (x,t) \in \Omega \times [0,T)\\
\mathbf{G}(\cdot,0)= G_0&\\
\mathbf{G}(x,t) \subset \Sigma & \forall (x,t) \in \partial \Omega \times [0,T)\\
\ip{\nu^G}{\mu\circ \mb{G}}(x,t)\leq0 & \forall (x,t) \in \partial \Omega \times [0,T)\\
\end{cases}
\end{equation}
then $\mb{G}$ is a comparison solution from below. 
The proof of this is very similar to Stahl's proof in the Euclidean setting \cite{Stahlfirst}, with some simplifications due to the geometry of Minkowski space. %We require the following maximum principle:
%\begin{prop}[Strong Maximum Principle]\label{SMP}
%Let $\epsilon>0$ be a small constant, $\Omega\subset \bb{R}^n$ a compact, connected domain with smooth boundary $\partial \Omega$ and outward pointing normal $\gamma$. Suppose $\phi:\Omega\times [0,T) \rightarrow \bb{R}$ be a continuous function of class $C^{2;1}(\Omega\times[0,T))$ in the neighbourhood of all point $(x,t)\in \Omega \times[0,T)$ with $|\phi(x,t)|<\epsilon$ such that 
%\begin{equation*}
% \begin{cases}
%  L(\phi)\geq 0 &\forall (x,t)\in \Omega \times[0,T)\\
% \gamma^iD_i \phi \geq 0 & \forall (x,t)\in \partial \Omega \times[0,T)\\
% \phi(\cdot, 0)\geq 0& \qquad\qquad\qquad\qquad\quad,
% \end{cases}
%\end{equation*}
%where $a^{ij}(x,t), b^i(x,t), c(x,t)\in L^\infty(\Omega \times[0,T))$ and \[L(\phi)=\pard{\phi}{t} - a^{ij}(x,t)D^2_{ij}\phi -b^i(x,t)D_i \phi - c(x,t)\phi\ \ .\] Then $\phi\geq 0$ for all $(x,t)\in \Omega \times [0,T)$. Furthermore if initially $\phi(\cdot,0)$ is not identically zero then $\phi(x,t)>0$ for all $(x,t)\in \Omega\times(0,T)$.
%\end{prop}
%\begin{proof}
%For a slightly more general maximum principle see \cite[Theorem 3.1, Corollary 3.2]{Stahlfirst}. 
%\end{proof}

%We have the following:
\begin{prop}\label{Comparison}
 Suppose $\Si$ satisfies Condition \ref{uassump} and we have smooth, spacelike solutions $\mb{F}$ of equation (\ref{MCF}) and $\mb{G}$ of equation (\ref{CompSol}) on a time interval $[0,T)$ such that $M_0$ is contained in the closure of one of the connected components of $\hat \Si \setminus N_0$. Then either $M_t \equiv N_t$ for all $t\in[0,T)$ or $M_t \cap N_t =\emptyset$ for all $t>0$. 
\end{prop}
\begin{proof}
 We consider $\mb{F}$ and $\mb{G}$ in coordinates $\ov F$ inside $\Sigma$ as in the previous section, and we write them as (smooth) graphs $u(x,t)$ and $w(x,t)$ respectively. Since initially $\mb{F}_0$ lies on one side of $\mb{G}_0$, without loss of generality we may assume that $u\geq w$ initially and that $\nu^G$ is an upwards pointing unit vector field. As in the calculations in the previous section we see that 
\begin{flalign*}
 u_t &=g^{ij}(x,u,Du)D^2_{ij} u + b(x,u,Du)\\
 w_t &\leq g^{ij}(x,w,Dw)D^2_{ij} w + b(x,w,Dw)
\end{flalign*}
while at the boundary,
\[\gamma^iD_iu = 0 , \ \ \gamma^i D_iw\leq 0 \]
Writing $\phi = u-w$ then by standard methods we may write
\[\phi_t  \geq a^{ij}(x,t)D_{ij}^2 \phi +b^i(x,t)D_i \phi +c(x,t) \phi, \ \ \gamma^iD_i\phi \geq 0\]
where $a^{ij}(x,t), b^i(x,t), c(x,t) \in L^\infty(\Omega \times [0,T))$. Since $\phi(\cdot, 0)\geq0$, when $\phi$ is small (i.e. when $\mb{F}$ and $\mb{G}$ are close together or touching) we may apply a strong maximum principle of Stahl \cite[Theorem 3.1, Corollary 3.2]{Stahlfirst}, to complete the proof. 
\end{proof}

\section{Evolution equations and boundary identities}\label{EE}
In this section we collect the necessary evolution equations and boundary identities. Firstly, we need standard evolution equations for evolution of the metric and normal:
\begin{lemma}
 On the interior of $M$ we have that \label{evolnu}
\begin{align*}
\ddt{\nu} &= \n H\ \ , \\ 
\ddt{g_{ij}}&=2Hh_{ij}\ \ ,\\ 
\ho H &=- H |A|^2\ \ .
\end{align*}
\end{lemma}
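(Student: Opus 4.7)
\begin{proof}[Proof proposal]
Both identities are straightforward direct computations from the flow equation $\partial_t \mathbf{F}=H\nu$; the only care that is required concerns the Lorentzian sign convention, since $\nu$ is timelike with $\langle\nu,\nu\rangle=-1$, and $h_{ij}=-\langle \partial_i\partial_j F,\nu\rangle$ as set out in Section \ref{RP}.

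\emph{Metric.} I would differentiate $g_{ij}=\langle \partial_i \mathbf{F},\partial_j\mathbf{F}\rangle$ in $t$ and commute $\partial_t$ with $\partial_i$, obtaining
\[
\ddt{g_{ij}}=\langle \partial_i(H\nu),\partial_j\mathbf{F}\rangle+\langle \partial_i\mathbf{F},\partial_j(H\nu)\rangle
=H\bigl(\langle\partial_i\nu,\partial_j\mathbf{F}\rangle+\langle\partial_i\mathbf{F},\partial_j\nu\rangle\bigr),
\]
using $\langle\nu,\partial_k\mathbf{F}\rangle=0$ to eliminate the terms in $\partial_iH$ and $\partial_jH$. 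Differentiating the orthogonality relation $\langle\nu,\partial_j\mathbf{F}\rangle=0$ tangentially gives
\[
\langle\partial_i\nu,\partial_j\mathbf{F}\rangle=-\langle\nu,\partial_i\partial_j\mathbf{F}\rangle=h_{ij},
\]
and by symmetry of $h_{ij}$ the two terms combine to $2Hh_{ij}$.

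\emph{Normal.} From $\langle\nu,\nu\rangle=-1$ I get $\langle\partial_t\nu,\nu\rangle=0$, so $\partial_t\nu$ is tangential; write $\partial_t\nu=a^k\partial_k\mathbf{F}$. To identify $a^k$, differentiate $\langle\nu,\partial_j\mathbf{F}\rangle=0$ in $t$, exchange $\partial_t$ with $\partial_j$, and use $\partial_t\mathbf{F}=H\nu$:
\[
\langle\partial_t\nu,\partial_j\mathbf{F}\rangle=-\langle\nu,\partial_j(H\nu)\rangle
=-(\partial_jH)\langle\nu,\nu\rangle-H\langle\nu,\partial_j\nu\rangle=\partial_jH,
\]
where $\langle\nu,\partial_j\nu\rangle=0$ again by differentiating $\langle\nu,\nu\rangle=-1$. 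Hence $a^k g_{kj}=\partial_jH$, so $a^k=g^{kj}\partial_j H$ and $\partial_t\nu=\nabla H$.
\end{proof}

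The entire argument is routine and contains no real obstacle; the only place a sign could go wrong is the single appearance of $\langle\nu,\nu\rangle=-1$ in the computation of $\langle\partial_t\nu,\partial_j\mathbf{F}\rangle$, which is precisely what ensures the formula has the same form as in the Riemannian case rather than picking up an overall minus sign.
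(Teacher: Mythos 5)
Your computation is correct, and the signs are handled properly: with the paper's conventions $h_{ij}=-\langle\partial^2_{ij}\mathbf{F},\nu\rangle$ and $\langle\nu,\nu\rangle=-1$, both identities come out exactly as stated. The paper itself simply cites \cite[Proposition 3.1]{EckerHuiskenCMCMink} for this lemma, and your argument is precisely the standard computation behind that reference, so there is nothing to compare beyond noting that you have written it out in full.
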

\begin{proof}
 See \cite[Proposition 3.1, Proposition 3.3]{EckerHuiskenCMCMink}.
\end{proof}

From the spatial and time derivatives of the boundary condition we have:
\begin{lemma}
\label{bdryderivs} 
For $p \in \partial M^n \times [0,T)$ and $W \in T_p M_t \cap T_p \Sigma$ then
\[ A(\mu, W)= -A^\Sigma(\nu, W)\ \ .\]
and also
\[\nabla_\mu H=-HA^\Sigma(\nu, \nu)\ \ .\]
\end{lemma}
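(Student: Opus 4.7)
The plan is to derive both identities by differentiating the Neumann boundary condition $\langle \nu, \mu\rangle \equiv 0$, once in a spatial direction $W$ tangent to both manifolds and once in time along the flow. The crucial preliminary observation is that the boundary condition itself forces, at each boundary point $p$: $\mu \in T_pM_t$ (since $\nu$ is the unit normal to $M_t$) and $\nu \in T_p\Sigma$ (since $\mu$ is the unit normal to $\Sigma$). This is what allows the ambient connection derivatives that appear after differentiating $\langle \nu,\mu\rangle=0$ to be interpreted as entries of the two second fundamental forms.

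For the first identity, I would extend $W \in T_pM_t \cap T_p\Sigma$ smoothly along a curve in $\partial M_t \subset \Sigma$ tangent to $W$ at $p$, so that along that curve both $\nu$ and $\mu$ are defined and the boundary condition gives $\langle \nu,\mu\rangle \equiv 0$. The Lorentzian Leibniz rule then yields
\[
0 = W\langle \nu,\mu\rangle = \langle \ov{\n}_W \nu, \mu\rangle + \langle \nu, \ov{\n}_W \mu\rangle.
\]
Because $\langle \nu,\nu\rangle = -1$, differentiating gives $\ov\n_W\nu \in T_pM_t$, and the Weingarten identity (with the sign convention $A(X,Y)= -\langle \ov\n_X Y,\nu\rangle$ for $X,Y\in TM_t$) becomes $\langle \ov\n_W\nu, X\rangle = A(W,X)$ for $X \in T_pM_t$; applied to $X=\mu$ this gives the first summand $A(W,\mu)$. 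Analogously, since $\langle\mu,\mu\rangle=1$ and $\nu \in T_p\Sigma$, the Weingarten identity on $\Sigma$ gives $\langle\nu,\ov\n_W\mu\rangle = A^\Sigma(W,\nu)$. Combining and using the symmetry of both fundamental forms delivers $A(\mu,W) = -A^\Sigma(\nu,W)$.

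For the second identity, I would instead differentiate $\langle \nu(x,t), \mu(\mbf{F}(x,t))\rangle = 0$ at a fixed $x \in \partial M^n$ in the time variable. Using $\ddt{\mbf{F}}= H\nu$ and Lemma \ref{evolnu} this produces
\[
0 = \ddt{}\langle\nu,\mu\rangle = \langle \n H,\mu\rangle + H\langle\nu,\ov\n_\nu\mu\rangle.
\]
On the boundary $\mu$ lies in $T_pM_t$, so $\langle \n H,\mu\rangle = \n_\mu H$; and since $\nu \in T_p\Sigma$, the same Weingarten identity on $\Sigma$ used above gives $\langle\nu, \ov\n_\nu\mu\rangle = A^\Sigma(\nu,\nu)$. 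Rearranging yields the claimed $\n_\mu H = -H A^\Sigma(\nu,\nu)$.

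There is no real analytic obstacle here; the only point that requires care is bookkeeping of the sign conventions in the two Weingarten maps, given that $\nu$ is timelike (so $\ov\n\nu$ stays tangent to $M_t$ without a sign flip from the metric on the normal bundle) while $\mu$ is spacelike (so $\ov\n\mu$ stays tangent to $\Sigma$). Once those are fixed consistently with the convention already in use for $A$ and $A^\Sigma$, both identities follow directly from the Leibniz rule applied to the single relation $\langle \nu,\mu\rangle=0$.
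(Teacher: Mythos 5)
Your proof is correct. The paper itself does not prove this lemma inline --- it only cites \cite[Lemma 5.2, Lemma 5.4]{LambertMinkowski} --- and your argument (differentiating $\ip{\nu}{\mu}=0$ spatially along $\partial M_t$ and in time using $\ddt{\nu}=\n H$, after observing that the boundary condition places $\mu\in T_pM_t$ and $\nu\in T_p\Sigma$) is precisely the standard derivation used in that reference, with the sign conventions handled consistently with the paper's $h_{ij}=-\ip{\partial^2 F/\partial x^i\partial x^j}{\nu}$.
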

\begin{proof}
 The is identical to the Euclidean case of Stahl \cite[Proposition 2.1, Proposition 2.2]{Stahlsecond}, see also \cite[Lemma 5.2, Lemma 5.4]{LambertMinkowski}.
\end{proof}

Importantly we will also need the evolution equation for $v=-\ip{V}{\nu}$. 
\begin{lemma}
 On the interior of the flowing manifold, \label{evolv}
\[\ho v = -v|A|^2 + 2g^{ij}A(\ov{\n}_i V, j)+g^{ij}\ip{\ov \n^2_{ij} V}{\nu}\]
holds.
\end{lemma}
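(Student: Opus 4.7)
The plan is to differentiate $v=-\ip{V}{\nu}$ in time and in space and combine the two. First I would extend $V$ smoothly to a neighbourhood of $\Sigma$ in $\bb{R}^{n+1}_1$ (the hypotheses only specify $V$ along $\Sigma$, but only the values of $V$ along the flow matter). For the time derivative, I would use the chain rule and Lemma \ref{evolnu}: along the flow,
\[
\ddt{}V\big|_{\mbf{F}(x,t)} = \ov\n_{H\nu}V = H\,\ov\n_\nu V,\qquad \ddt{\nu}=\n H,
\]
so that
\[
\ddt{v} = -H\,\ip{\ov\n_\nu V}{\nu} - \ip{V}{\n H}.
\]

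Next I would compute $\n_i v$ and $\Delta v$. Using the Weingarten relation $\ov\n_{e_i}\nu = h_i^{\,k}e_k$,
\[
\n_i v = -\ip{\ov\n_i V}{\nu} - h_i^{\,k}\ip{V}{e_k}.
\]
Differentiating once more and applying the Gauss formula $\ov\n_{e_i}e_j=\n_i e_j - h_{ij}\nu$ (remembering that $\ip{\nu}{\nu}=-1$), one obtains four kinds of contributions to $\n^2_{ij}v$: (a) the term $-\ip{\ov\n^2_{ij}V}{\nu}$ coming from differentiating $\ov\n V$ in the direction of $e_i$; (b) a mixed term $-h_i^{\,k}\ip{\ov\n_j V}{e_k}-h_j^{\,k}\ip{\ov\n_i V}{e_k}$; (c) the term $-(\n_i h_j^{\,k})\ip{V}{e_k}$ coming from differentiating $h$; and (d) a curvature term $h_i^{\,k}h_{jk}\,\ip{V}{\nu}=-v\,h_i^{\,k}h_{jk}$ picked up from the normal part of $\ov\n_j e_k$. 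Tracing with $g^{ij}$ and using Codazzi in the form $g^{ij}\n_i h_j^{\,k}=\n^k H$, the contribution from (c) becomes $-\ip{V}{\n H}$, and the contribution from (d) gives $-v|A|^2$.

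Combining $\Delta v - \frac{d}{dt}v$, the two $\ip{V}{\n H}$ terms cancel exactly, the $-v|A|^2$ survives, and the cross terms from (b) gather into $2g^{ij}A(\ov\n_i V,j)$. There remains the normal piece $g^{ij}\ip{\ov\n^2_{ij}V}{\nu}$ from (a) together with the leftover time contribution $-H\ip{\ov\n_\nu V}{\nu}$; but the latter is exactly the ``missing'' normal direction needed to complete the ambient trace $g^{ij}\ov\n^2_{ij}$ from a purely tangential double trace, so after accounting for the indefinite signature (where $\ip{\nu}{\nu}=-1$ corrects the usual Riemannian sign) these two pieces combine into the single term $g^{ij}\ip{\ov\n^2_{ij}V}{\nu}$ as stated. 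This yields
\[
\ho v = -v|A|^2 + 2g^{ij}A(\ov\n_i V,j) + g^{ij}\ip{\ov\n^2_{ij}V}{\nu}.
\]

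The main obstacle is signs and conventions: the Lorentzian convention $\ip{\nu}{\nu}=-1$, together with the paper's sign conventions for $h_{ij}$, means every step of the Gauss and Weingarten formulas must be tracked carefully, and the $\n H$ cancellation depends on Codazzi being applied in the correct form. Everything else is routine tensorial book-keeping, essentially the Lorentzian analogue of the Ecker--Huisken calculation in \cite{EckerHuiskenCMCMink}, adapted to a non-parallel vector field $V$.
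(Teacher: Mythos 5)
Your strategy is exactly the paper's: compute $\frac{d}{dt}v$ from Lemma \ref{evolnu} together with $\frac{d}{dt}V=H\ov\n_\nu V$, compute $\Delta v$ via the Gauss, Weingarten and Codazzi--Mainardi relations, and observe that the $\n_{V^\top}H$ terms cancel while the leftover $-H\ip{\ov\n_\nu V}{\nu}$ from the time derivative is absorbed by the discrepancy between $\n_{e_i}e_j$ and $\ov\n_{e_i}e_j$ in the meaning of $g^{ij}\ov\n^2_{ij}V$; your ``missing normal direction'' remark is precisely the paper's term $-g^{ij}\ip{\ov\n_{\n_i j-\ov\n_i j}V}{\nu}=-H\ip{\ov\n_\nu V}{\nu}$ inside $\Delta v$. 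The decomposition into contributions (a)--(d) and the identification of the key cancellation are all present and correct in structure.

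The sign bookkeeping, however, is not internally consistent, and the slip lands on the one term whose sign matters downstream. With the paper's conventions $h_{ij}=-\ip{\partial^2F/\partial x^i\partial x^j}{\nu}$ and $\ip{\nu}{\nu}=-1$, the Gauss formula is $\ov\n_{e_i}e_j=\n_ie_j+h_{ij}\nu$ (pair both sides with $\nu$ to check), not $\n_ie_j-h_{ij}\nu$ as you wrote; hence contribution (d) to the Hessian is $+v\,h_i^{\,k}h_{jk}$, so $\Delta v$ contains $+v|A|^2$, and since the paper's operator is $\ho=\frac{d}{dt}-\Delta$ (you combine ``$\Delta v-\frac{d}{dt}v$'', its negative) one recovers the stated $-v|A|^2$. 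As written, your two sign errors cannot be cured by a single overall flip: taking $\Delta v-\frac{d}{dt}v$ at face value makes your $|A|^2$ term come out right but reverses the signs of (a) and (b), whereas using $\ho v=\frac{d}{dt}v-\Delta v$ with your (d) produces $+v|A|^2$. Since $-v|A|^2$ is the good term driving Theorem \ref{Gradest}, this needs to be fixed by redoing the computation rather than by matching to the known answer. A minor further point: $V$ must be a smooth ambient field on the whole region $\hat\Sigma$ swept by the flow (as Conditions \ref{cassump} and \ref{uassump} implicitly require), not merely on a neighbourhood of $\Sigma$, since $v$ and its evolution are used on all of $M_t$.
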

\begin{proof}
We calculate from Lemma \ref{evolnu}
\[\ddt{v} =  - \n_{V^\top} H - H \ip{\ov \n_\nu V}{\nu}\]
and
\begin{flalign*}
 \Delta v &= -g^{ij}\left(\ip{\ov \n^2_{ij} V}{\nu} + 2A(i, (\ov \n_j V)^\top)  + \n_{V^{\top}} h_{ij} \right.\\
&\qquad\qquad\qquad\qquad\qquad\qquad\qquad \left. +h_{ir}g^{rk}h_{kj}\ip{\nu}{V} -  \ip{\ov \n_{\n_i j -\ov \n_i j  } V}{\nu}\right)\\ 
& = -g^{ij} \ip{\ov \n^2_{ij} V}{\nu} -2g^{ij}A(i, (\ov \n_j V)^\top) +v|A|^2 -\n_{V^{\top}} H - H \ip{\ov \n_\nu V}{\nu}\ ,
\end{flalign*}
where we used the Codazzi--Mainardi and Weingarten formulae. 
\end{proof}

\begin{lemma} \label{evolu}
We define the function $u:M\ra \bb{R}$ by $u = \tau(F(x,t))$, then 
\[\ho u  = - g^{ij}\ov \n^2_{ij} \tau \]
and we furthermore remark that 
\[|\n u|^2 = \psi^{-2}(\hat{v}^2 -1)\]
\end{lemma}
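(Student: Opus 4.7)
The plan is to view $u = \tau \circ F$ as the composition of the ambient time function with the flow map and to compute both identities by direct application of the chain rule, the Gauss formula for the pullback Laplacian, and the explicit expression $\ov\n\tau = \size{\pard{\ov F}{\l}}^{-2}\pard{\ov F}{\l}$ recorded in the introduction.

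For the heat operator, first apply the chain rule in $t$ together with the flow equation $\ddt{F} = H\nu$ to obtain $\ddt{u} = \ip{\ov\n\tau}{H\nu} = H\ip{\ov\n\tau}{\nu}$. Next, starting from $\n_i u = \ip{\ov\n\tau}{\partial_i F}$, take a covariant derivative in $\partial_j F$ to get
\[\n^2_{ij} u = \ov\n^2 \tau(\partial_i F, \partial_j F) + \ip{\ov\n\tau}{\partial^2_{ij} F - \Gamma^k_{ij}\partial_k F}.\]
The Gauss formula, combined with the paper's sign convention $h_{ij} = -\ip{\partial^2_{ij} F}{\nu}$ and $\ip{\nu}{\nu} = -1$, identifies $\partial^2_{ij} F - \Gamma^k_{ij}\partial_k F = h_{ij}\nu$, so tracing with $g^{ij}$ yields $\Delta u = g^{ij}\ov\n^2_{ij}\tau + H\ip{\ov\n\tau}{\nu}$. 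The two $H\ip{\ov\n\tau}{\nu}$ terms then cancel in $\ho u$, leaving the first identity.

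For the second identity, the tangential component of $\ov\n\tau$ along $M$ is by definition $\n u$, so the normal decomposition reads $\ov\n\tau = \n u - \ip{\ov\n\tau}{\nu}\nu$, where the minus sign arises from $\ip{\nu}{\nu} = -1$. Taking the Lorentzian squared norm and using $\ip{\n u}{\nu} = 0$ yields $\size{\ov\n\tau}^2 = \size{\n u}^2 - \ip{\ov\n\tau}{\nu}^2$. The explicit expression for $\ov\n\tau$ gives $\size{\ov\n\tau}^2 = -\psi^{-2}$, while $\hat V = \psi^{-1}\pard{\ov F}{\l}$ together with $\hat v = -\ip{\hat V}{\nu}$ yields $\ip{\ov\n\tau}{\nu} = \psi^{-1}\hat v$; rearranging gives $\size{\n u}^2 = \psi^{-2}(\hat v^2 - 1)$. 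There is no serious obstacle: the lemma is essentially a bookkeeping exercise, and the only thing requiring care is the sign flips introduced by the timelike normal and by $\size{\pard{\ov F}{\l}}^2 = -\psi^2$.
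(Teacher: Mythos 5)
Your proposal is correct and follows essentially the same route as the paper: the chain rule plus the Gauss formula to identify $\ddt{u}$ and $\Delta u$ up to the cancelling term $H\ip{\ov\n\tau}{\nu}$, and the tangential--normal decomposition of $\ov\n\tau$ (with $\size{\ov\n\tau}^2=-\psi^{-2}$ and $\ip{\ov\n\tau}{\nu}=\psi^{-1}\hat v$) for the gradient identity. The sign bookkeeping with the timelike normal is handled correctly, so nothing further is needed.
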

\begin{proof}
We calculate for a general ambient function $u$
\[\ddt{u} = H \ov \n_\nu u, \ \ \Delta u=g^{ij}\ov \n^2_{ij} u +H\ov \n_\nu u\ \ .\]
Now since $\ov \n u $ is strictly timelike, we calculate
\[\n u  = \ov \n_i \tau g^{ij}\pard{}{x^j} = (\ov \n \tau)^\top = \ov \n \tau - \psi^{-1} \hat{v} \nu \]
and so
\[|\n u |^2  =   \psi^{-2} (\hat{v}^2-1)\]
as claimed.
\end{proof}

\begin{lemma}\label{dVoldt}
 For any $f\in C^1(M \times [0,T])$ we have
\[\ddt{}\int_M f dV = \int_M \ddt{f} + H^2 f dV\]
\end{lemma}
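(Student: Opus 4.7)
The plan is to reduce the claim to computing how the volume element itself evolves along the flow. Writing the integral in local coordinates on $M$ (which are time-independent parametrising coordinates on the reference manifold $M^n$), we have $dV=\sqrt{\det g_{ij}}\,dx$, so
\[
\frac{d}{dt}\int_M f\,dV = \int_M \frac{df}{dt}\sqrt{\det g_{ij}}\,dx + \int_M f\,\frac{d}{dt}\sqrt{\det g_{ij}}\,dx.
\]
All the content is thus in identifying $\tfrac{d}{dt}\sqrt{\det g_{ij}}$ with $H^{2}\sqrt{\det g_{ij}}$.

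First I would invoke Lemma \ref{evolnu}, which gives $\tfrac{d}{dt}g_{ij}=2H h_{ij}$. Next I would apply Jacobi's formula for the derivative of a determinant:
\[
\frac{d}{dt}\det g_{ij}=(\det g_{ij})\,g^{ij}\frac{d}{dt}g_{ij}=2H(\det g_{ij})\,g^{ij}h_{ij}=2H^{2}\det g_{ij},
\]
using $g^{ij}h_{ij}=H$. Differentiating the square root then yields $\tfrac{d}{dt}\sqrt{\det g_{ij}}=H^{2}\sqrt{\det g_{ij}}$, and substituting back gives the stated identity.

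I do not expect any real obstacle here: the calculation is a direct consequence of Lemma \ref{evolnu} together with Jacobi's formula, and no boundary terms arise because we are only differentiating in time on a fixed parametrising domain. The only point worth double-checking is the sign: in the Lorentzian setting the timelike normal satisfies $\langle \nu,\nu\rangle=-1$, but since the formula $\tfrac{d}{dt}g_{ij}=2Hh_{ij}$ already incorporates the sign conventions used in this paper (with $h_{ij}=-\langle\partial^{2}_{ij}F,\nu\rangle$ and $\tfrac{dF}{dt}=H\nu$), the trace produces $+H^{2}$ rather than the $-H^{2}$ familiar from Euclidean MCF, which is exactly what the statement requires.
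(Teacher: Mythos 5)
Your proof is correct and follows essentially the same route as the paper: pull the integral back to the fixed parametrising domain, apply Lemma \ref{evolnu} together with the determinant derivative to get $\frac{d}{dt}\sqrt{\det g_{ij}} = H^2\sqrt{\det g_{ij}}$, and note that no boundary contribution arises (the paper phrases this as the velocity $H\nu$ being perpendicular to $\mu$ at the boundary, so the surface does not flow out of $\hat{\Sigma}$). Your sign check at the end correctly identifies why the Lorentzian convention yields $+H^2$.
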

\begin{proof}
Since at the boundary $\ddt{\mb{F}}\perp \mu$, we do not need to concern ourselves with the manifold flowing ``out'' of $\hat{\Sigma}$. Therefore as is standard we may calculate using Lemma \ref{evolnu}
\[\ddt{} \int_{M_t} f dV =\ddt{} \int_{M^n} f \sqrt{\det g_{ij}}dx = \int_M \ddt{f} + H^2 f dV\] 
\end{proof}

We also require the boundary derivative
\begin{lemma}\label{boundv}
 Let $V$ be a (strictly) timelike eigenvector of the second fundamental form such that $\ov \n_\mu V=0$ and suppose $M_t$ is spacelike. Then at the boundary we have
\[\n_\mu v = -v[A^\Sigma(\nu, \nu) -A^\Sigma(V, V)]\ \ .\]
\end{lemma}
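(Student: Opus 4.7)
The plan is to differentiate $v=-\ip{V}{\nu}$ directly, using the two facts that $\mu$ is tangent to $M$ at the boundary (because of the perpendicular boundary condition, $\nu\perp \mu$ implies $\nu\in T\Sigma$, hence by dimension $\mu\in TM$) and that $V$ is parallel in the $\mu$ direction. Since $\mu\in TM$, the ambient and intrinsic derivatives of the scalar $v$ agree, so
\[
\n_\mu v \;=\; -\ip{\ov\n_\mu V}{\nu}-\ip{V}{\ov\n_\mu \nu} \;=\; -\ip{V}{\ov\n_\mu \nu},
\]
where I have used Condition \ref{cassump} to kill the first term.

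Next I decompose $V$ with respect to $M$. Since $\nu$ is the timelike unit normal to $M$, one has $V = V^{\top} - \ip{V}{\nu}\nu / \ip{\nu}{\nu} = V^{\top}+v\,\nu$, with $V^{\top}\in T_pM$. Because $\nu$ lies in $T\Sigma$ at the boundary (by the Neumann condition) and $V\in T\Sigma$, the tangential part $V^{\top}=V-v\nu$ also lies in $T_p\Si$, so $V^{\top}\in T_pM\cap T_p\Si$. The Weingarten relation for $M$, together with $\ip{\nu}{\ov\n_\mu\nu}=0$, gives
\[
\ip{V}{\ov\n_\mu\nu} \;=\; \ip{V^{\top}}{\ov\n_\mu\nu} \;=\; A(\mu,V^{\top}).
\]
Here is where I invoke Lemma \ref{bdryderivs}, applicable precisely because $V^{\top}\in T_pM\cap T_p\Si$, to replace this by a quantity computed on $\Si$:
\[
A(\mu, V^{\top})\;=\;-A^{\Si}(\nu, V^{\top})\;=\;-A^{\Si}(\nu, V)+vA^{\Si}(\nu,\nu).
\]

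Finally, the eigenvector hypothesis lets me reduce $A^{\Si}(\nu,V)$ to $A^{\Si}(V,V)$. Writing $A^{\Si}(V,\cdot)=\l_V\ip{V}{\cdot}$, and using that $V$ is a timelike unit vector so $\ip{V}{V}=-1$, gives $\l_V=-A^{\Si}(V,V)$, whence
\[
A^{\Si}(\nu,V)\;=\;\l_V\ip{V}{\nu}\;=\;-A^{\Si}(V,V)\cdot(-v)\;=\;v\,A^{\Si}(V,V).
\]
Plugging back in yields $A(\mu,V^{\top})=v[A^{\Si}(\nu,\nu)-A^{\Si}(V,V)]$ and therefore $\n_\mu v = -v[A^{\Si}(\nu,\nu)-A^{\Si}(V,V)]$, as claimed.

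The only step that requires care is the decomposition/orthogonality argument: I need both $V^{\top}\in TM$ (automatic) and $V^{\top}\in T\Sigma$ (which uses $\nu\in T\Si$ at the boundary). Everything else is a direct application of Gauss--Weingarten plus the eigenvector identity, so I expect no serious technical obstacle beyond sign bookkeeping in the Lorentzian setting, in particular the factor $\ip{\nu}{\nu}=-1$ in the projection formula for $V$.
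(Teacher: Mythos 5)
Your proof is correct and follows essentially the same route as the paper: reduce $\n_\mu v$ to $-A(\mu,V^\top)$ via $\n_\mu V=0$ and the Weingarten relation, convert to $A^\Sigma(\nu,V^\top)$ using Lemma \ref{bdryderivs}, and finish with the decomposition $V^\top=V-v\nu$ and the eigenvector identity $A^\Sigma(\nu,V)=vA^\Sigma(V,V)$. The only blemish is a sign typo in your intermediate projection formula ($V=V^\top+\frac{\ip{V}{\nu}}{\ip{\nu}{\nu}}\nu$, without the extra minus sign), which is harmless since your final decomposition $V=V^\top+v\nu$ is the correct one.
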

\begin{proof}
 Using Lemma \ref{bdryderivs},  we calculate that 
\begin{flalign*}
 \n_\mu v  = -A(\mu, V^\top) = A^\Sigma(\nu, V^\top)=A^\Sigma(\nu, V-v\nu)=-vA^\Sigma(\nu, \nu)+vA^\Sigma(V, V)
\end{flalign*}
because an eigen vector has the property, $\ov \n_V \mu = \lambda V$ and so $\lambda=-A^\Sigma(V, V)$. Therefore $A(V, \nu) = \lambda \ip{V}{\nu} = vA^\Sigma(V,V)$. 
\end{proof}

\section{Gradient estimates}\label{GE}

Throughout this section we assume Conditions \ref{cassump}, \ref{uassump} and \ref{finitearea} on $\Si$, to obtain the key estimate required for long time existence of the flow, namely the gradient estimate. Firstly we use Condition \ref{cassump} to establish signs on the boundary derivatives of $v$ and $H$, which is a vital step in proving long time existence, compare with similar calculations in \cite{WheelersDirichletNeumann}. We observe that since $\sum_{I=1}^{n-1} (\ip{W_i}{\nu})^2=\size{\nu + \ip{\nu}{ V}V}^2  = v^2-1$, Condition \ref{cassump} implies
\begin{flalign*}
 A^\Sigma(\nu, \nu)-A^\Sigma(V,V)&=\sum_{I} A(W_i, W_i)(\ip{W_i}{\nu})^2 + A^\Sigma(V, V)(v^2-1)\\
&\geq -A^\Sigma(V,V)\sum_I (\ip{W_i}{\nu})^2 +A^\Sigma(V,V)(v^2-1)\\
&=0 \ \ .
\end{flalign*}
As a result, Lemmas \ref{bdryderivs} and \ref{boundv} give that
\begin{equation}
\n_\mu v \leq 0, \ \ \ \ \n_\mu H^2 = -H^2A^\Sigma(\nu,\nu) \leq -H^2A^\Sigma(V,V)\ \ . \label{boundaryderivs} 
\end{equation}

\begin{remark}
 If instead of Condition \ref{cassump} we assume that $\Si$ has merely bounded curvature, the best estimates we may get on the boundary derivatives of $v$ and $H$ are (for some $C(\Sigma)$) $\n_\mu v\leq C v^3$, and $\n_\mu H^2 \leq C H^2 v^2$. This extra factor of $v^2$ adds significant technical problems, with the boundary terms overpowering the evolution equation terms. 
\end{remark}

\begin{remark}
 The gradient estimate we give below depends on a Stampaccia iteration argument (compare \cite{Huiskengraph}\cite{HuiskenConvex}) to get an estimate on $H$. We note that it is also possible to obtain a gradient estimate \emph{without} estimating $H$ using purely maximum principle arguments as in \cite{GerhardtHSurfacesthesequal}. However in an unbounded situation, the methods below give a much better exponent in $u$.
\end{remark}

As is common with Minkowski space problems \cite{BartnikEntire}\cite{EckerMinkowskiDBC}\cite{EckerHuiskenCMCMink} we will estimate $v^{-2}|\n v|^2$ in terms of $|A|^2$ and $H^2$, allowing us to obtain a sign on the evolution of $v$. For this to work, we also need to be able to estimate the extra $H^2$ term by a sufficiently small power of $v$. Unfortunately the boundary derivative of $H^2$ may be positive (when $A^\Sigma(V,V)<0$) and so a direct application of maximum principle does not work. Inspired by the Neumann gradient estimate of Huisken \cite{Huiskengraph} where there were similar problems with the boundary derivative of $v$, we instead use a Stampacchia iteration technique, and to apply this we need Condition \ref{finitearea}. Lemma \ref{dVoldt} then immediately implies that if Condition \ref{finitearea} holds then there exists a finite constant $C(\Sigma)$ which depends on the maximum area of the flowing manifold,\emph{ but is independent of $T$}, such that
\begin{equation}\int_0^T\int_M H^2 dV dt \leq C\ \ . \label{L2Hest}
\end{equation}

We aim to prove:
\begin{prop} Suppose $\Si$ satisfies Conditions \ref{cassump} and \ref{uassump} and \ref{finitearea} and a solution of equation (\ref{MCF}) exists up to some time $T$. Then there exist constants $0<p<1$,  $C_1, C_2>0$ depending only on $n, \Sigma$ and $M_0$ such that
 \label{Hest}
\[ \underset{(x, t)\in M\times[0,T]} \sup |H| \leq C_1 +C_2 \underset{(x, t)\in M\times[0,T]} \sup v^p \ \ .\]
\end{prop}

We introduce the notation \[\mh = \underset{(x, t)\in M\times[0,T]} \sup |H|\qquad\text{ and }\qquad\mv =  \underset{(x, t)\in M\times[0,T]} \sup v\ \ .\] Proposition \ref{Hest} may be proven using the following estimate on the $L^p$ norm of $|H|$ in terms close to $\mv^\frac{p}{2}$ when $p$ is large.
\begin{lemma} \label{LpHest}
 Suppose $\Si$ satisfies Conditions \ref{cassump} and \ref{uassump} and \ref{finitearea} and a solution of equation (\ref{MCF}) exists up to some time $T$. For $k, \gamma>0$ where $k\in \bb{Z}$ and $p=n+2k+\gamma$, there exists a constants $C_1, C_2>0$ depending only on $n, p, \gamma, \Sigma$ and $M_0$ such that
\[ \int_0^T \int_M |H|^p dVdt\leq C_1\mv^{k-1} + C_2 \mv^k \mh^{n+\gamma-2}\ \ .\]
\end{lemma}
\begin{proof}
 Suppose $p>n$ and let $C_n$ be any constant depending on $n,p,\Sigma$ which may change from line to line. By Lemmas \ref{evolnu}, \ref{bdryderivs} and \ref{dVoldt},
\begin{flalign*}
 \ddt{}\int_M |H|^p dV &= \int_{\partial M} -p|H|^pA^\Sigma(V,V)dV^\partial\\
&\qquad\qquad +\int_M -pH^p|A|^2 - p(p-1)H^{p-2}|\n H|^2 +H^{p+2} dV \ \ .
\end{flalign*}
By Lemma \ref{intlemmas} we have that 
\begin{flalign*}\int_{\partial M} -p|H|^pA^\Sigma(V,V)dV^\partial &\leq C_n \int_{\partial M} |H|^pdV^\partial\\
&\leq C_n \int_M|H|^{p-1}|\n H|+|H|^p(v+|A|)  dV 
\end{flalign*}
and so using Young's inequality and $|A|^2\geq\frac{1}{n}H^2$ then
\begin{flalign*}
 \ddt{}\int_M |H|^p dV &
\leq \int_M |H|^{p-2}\left[-(p-n)H^2|A|^2 -p(p-1)|\n H |^2\right. \\
&\qquad\qquad\qquad\qquad\left.+C_n |H||\n H| + C_nH^2(v+|A|)\right]dV\\
&\leq \int_M |H|^{p-2}\left[-\frac{p-n}{2n}H^4 +C_n H^2v\right]dV
\end{flalign*}
and so integrating, 
\[\int_0^T\int_M |H|^{p+2}dVdt\leq C_n \mv\int_0^T\int_M |H|^{p}dVdt + \int_{M_0} |H|^pdV \ \ .\]
Iterating this estimate, we see that for $p$ as described in the statement of the Lemma
\[ \int_0^T\int_M |H|^{p+2}dVdt\leq C_1\mv^k \int_0^T\int_M |H|^{n+\gamma} dV dt + C_2 \mv^{k-1}\]
which completes the proof in light of equation (\ref{L2Hest})
\end{proof}
As is standard for such arguments (see, for example \cite{Huiskengraph}), we will consider the cut-offs of the function $f=H^2$ which we will write as \mbox{$f_k=(H^2-k)_+$}. We define the time dependent set $A(k)=\{x\in M_t: f_k>0\}$, and look to estimate a measure of this set,
\[\|A(k)\|=\int_0^T\int_{A(k)} dVdt\ \ .\]
\begin{lemma}\label{Akest}
 For any $k>0$, there exists a constant $C(k, \Sigma)$ independent of $T$ such that
\[\|A(k)\|\leq C\]
\end{lemma}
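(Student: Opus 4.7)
The plan is to deduce this immediately from Lemma \ref{L2Hest} via a Chebyshev--Markov style estimate. Since $A(k) = \{x \in M_t : H^2 > k\}$, we have $H^2 > k$ pointwise on $A(k)$, so $1 < H^2/k$ there. Integrating this trivial inequality over $A(k)$ and then over $[0,T]$ gives
\[
\|A(k)\| = \int_0^T \int_{A(k)} 1 \, dV dt \leq \frac{1}{k} \int_0^T \int_{A(k)} H^2 \, dV dt \leq \frac{1}{k} \int_0^T \int_M H^2 \, dV dt.
\]
By Lemma \ref{L2Hest} the right-hand side is bounded by $C(\Sigma)/k$, which is independent of $T$. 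Setting $C = C(\Sigma)/k$ concludes the proof.

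There is essentially no obstacle here; the content of the statement is that the space-time measure of the super-level sets of $H^2$ is uniformly controlled, and this reduces to the uniform $L^2$ bound on $H$ already established from the monotonicity of the area functional under MCF with the Neumann boundary condition. The only mild subtlety is that the bound degenerates as $k \to 0$, which is consistent with the intended use of this lemma in a Stampacchia iteration starting from some fixed threshold $k_0 > 0$.
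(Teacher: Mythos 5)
Your proof is correct and is essentially the paper's own argument: a Chebyshev--Markov estimate applied to the super-level set $A(k)$, combined with the $T$-independent $L^2$ bound on $H$ from Lemma \ref{L2Hest}. The paper phrases it with the slightly redundant intermediate set $A(k/2)$ and a factor $2/k$, but this is the same proof.
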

\begin{proof}
 \[\|A(k)\|=\int_0^T\int_{A(k)} dVdt\leq \frac{2}{k}\int_0^T\int_{A(\frac{k}{2})} H^2 dVdt\leq \frac{2C}{k} \]
where the constant is from equation (\ref{L2Hest}).
\end{proof}

We will also need the following iteration Lemma:
\begin{lemma}\label{Stampacciait}
 Suppose $\phi:(k_0, \infty) \rightarrow \bb{R}$ is a non--negative non--increasing function such that for all $h>k\geq k_0$ then
\[ \phi(h) \leq \frac{C}{(h-k)^\alpha} (\phi(k))^\beta\]
where $C, \alpha$ and $\beta$ are positive constants. Then if $\beta>1$ then $\phi(k_0+d)=0$ for
\[ d^\alpha = C [\phi(k_0)]^{\beta-1} 2^{\alpha\frac{\beta}{\beta-1}}\ \ .\]
\end{lemma}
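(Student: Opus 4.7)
This is a classical Stampacchia-style iteration lemma, and the plan is to cascade the hypothesis along a geometric sequence of thresholds $k_n \nearrow k_0+d$, then show that $\phi(k_n)$ decays quickly enough that the limit forces $\phi(k_0+d)=0$.

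Concretely, I would set $k_n = k_0 + d(1 - 2^{-n})$ for $n\geq 0$, so that $k_0$ is the starting value, $k_n$ increases monotonically to $k_0+d$, and $k_{n+1}-k_n = d\,2^{-(n+1)}$. Plugging $h=k_{n+1}$, $k=k_n$ into the hypothesis gives
$$\phi(k_{n+1}) \;\leq\; \frac{C\,2^{\alpha(n+1)}}{d^\alpha}\,\phi(k_n)^\beta.$$
The natural ansatz is a geometric bound $\phi(k_n) \leq \phi(k_0)\,q^{-n}$ for some $q>1$. I would take $q := 2^{\alpha/(\beta-1)}$, which is strictly greater than $1$ precisely because $\beta>1$. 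The inductive step then demands
$$\frac{C\,2^{\alpha(n+1)}}{d^\alpha}\,\phi(k_0)^{\beta-1}\,q^{-n\beta} \;\leq\; q^{-(n+1)},$$
and rearranging using $q^{\beta-1} = 2^\alpha$ eliminates the $n$-dependence, leaving the single requirement $d^\alpha \geq C\,\phi(k_0)^{\beta-1}\,2^{\alpha\beta/(\beta-1)}$. This is exactly the choice of $d$ prescribed in the statement, so the ansatz closes and the induction goes through.

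Once the geometric decay $\phi(k_n) \leq \phi(k_0)\,q^{-n}$ is in hand, the conclusion is immediate: since $k_n < k_0+d$ and $\phi$ is non-increasing, $0 \leq \phi(k_0+d) \leq \phi(k_n) \to 0$, hence $\phi(k_0+d)=0$. I do not anticipate any real obstacle here; the only delicate point is tuning $q$ so that $q^{\beta-1}$ absorbs the growth factor $2^\alpha$ arising from $(h-k)^{-\alpha}$ at each iteration, and this choice is forced uniquely by the exponents in the hypothesis.
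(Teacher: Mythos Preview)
Your argument is correct and is precisely the classical Stampacchia iteration; the paper does not supply its own proof but simply cites \cite[Lemma 4.1 i)]{Stampacchia}, where this same geometric-sequence argument is carried out. There is nothing to compare---your proof is the standard one being referenced.
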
 
\begin{proof}
See \cite[Lemma 4.1 i)]{Stampacchia}.
\end{proof}
We now prove the Proposition:
\begin{proof}[Proof of Proposition \ref{Hest}]
We look at the evolution of $f_k^p$ for some large $p>\frac{n}{2}$. From Lemma \ref{evolnu} and (\ref{boundaryderivs}),
\[\ho f_k^p = pf_k^{p-1}\left[-2H^2|A|^2 -2|\n H|^2\right]-p(p-1)f_k^{p-2}4H^2|\n H|^2\]
\[\n_\mu f_k = -pf_k^{p-1}H^2A^\Sigma(V,V)\leq C_n f_k^{p-1}H^2\ \ .\]
Therefore using Lemma \ref{intlemmas} we have:
\begin{flalign*}
 \int_{\partial M} \n_\mu f_k^p dV^\partial &\leq C_n \int_{\partial M}f_k^{p-1}H^2 dV^\partial \\
&\leq C_n \int_M f_k^{p-2}|H|^3|\n H| + f_k^{p-1} |H| |\n H| +  f_k^{p-1}H^2 (v + |A|)dV\ \ .
\end{flalign*}
Estimating similarly to in Lemma \ref{LpHest}, (and using that $2p>n$) 
\begin{flalign*}
 \ddt{}\int_M f_k^pdV &\leq \int_M  pf_k^{p-1}\left[-2H^2|A|^2 -2|\n H|^2 +C_n|H| |\n H|+C_nH^2 (v + |A|)\right]\\
&\qquad+p(p-1)f_k^{p-2}\left(-4H^2|\n H|^2+C_n|H|^3|\n H|\right) + H^2 f_k^p dV\\
&\leq \int_M  pf_k^{p-1}\left[C_nvH^2  -|\n H|^2 \right]\\
&\qquad+p(p-1)f_k^{p-2}\left(-2H^2|\n H|^2+C_n|H|^4\right) dV \ \ .
\end{flalign*}
We have that $|\n f_k^p|\leq f_k^{p-1}\left[\frac{C_n}{\epsilon} H^2 +\epsilon |\n H|^2\right]$, and so 
\begin{flalign*}
 \ddt{}\int_M f_k^pdV & \leq \int_M C_nf_k^{p-2}H^4v dV- C_n\int_M |\n f_k^p| +f_k^p dV\\
&\leq C_n\mv \int_{A(k)} H^{2p} dV - \frac{C_n}{\mv}\left(\int_M f_k^\frac{np}{n-1}\right)^\frac{n-1}{n}\ \ .
\end{flalign*}
We now set $k>k_0=\underset{x\in M_0} \sup H^2$ and integrate to get
\[\underset{t\in[0,T]}\sup \int_M f_k^p dV+ \frac{C_n}{\mv} \int_0^T \left(\int_M f_k^\frac{np}{n-1}\right)^\frac{n-1}{n} dt \leq C_n\mv \int_0^T\int_{A(k)} H^{2p} dVdt\ \ .\]
By standard methods,
\[\underset{t\in[0,T]}\sup \int_M f_k^p dV+ \frac{C_n}{\mv} \int_0^T \left(\int_M f_k^\frac{np}{n-1}\right)^\frac{n-1}{n} dt \geq \frac{C_n}{\mv^\frac{n}{n+1}}\frac{\int_0^T\int_{A(k)}f_k^p dV dt}{\|A(k)\|^\frac{1}{n+1}}\]
and so by H\"older's inequality, 
\begin{flalign*}|h-k|^p\|A(h)\|&\leq \int_0^T\int_{A(k)}f_k^p dV dt \\
&\leq  C_n \mv^{1+\frac{n}{n+1}}\left(\int_0^T\int_M H^\frac{2p}{\epsilon}\right)^\epsilon \|A(k)\|^{1-\epsilon + \frac{1}{n+1}}
\end{flalign*}
We now set $\epsilon = \frac{1}{2(n+1)}$, let $j \in \bb{Z}$ be so large that $p>2$ where $\frac{2p}{\epsilon} = n+1+2j$. By Lemma \ref{LpHest},
\[|h-k|^p\|A(h)\| \leq C_n \mv^{1+ \frac{n}{n+1}}\left(\mv^{j-1} + \mv^j \mh^{n-1}\right)^\epsilon\|A(k)\|^{1+\frac{1}{2(n+1)}}\]
Therefore from Lemma \ref{Stampacciait}, Lemma \ref{Akest} we see that $\|A(k_0+1 +d)\|=0$ for particular $d$ depending on $\mv$ and $\mh$. Explicitly, we may estimate:
\begin{flalign*}
 \mh^2 &\leq k_0+1 + C_n \mv^\frac{1+\frac{n}{n+1}}{p}\left(\mv^{j-1} + \mv^j \mh^{n-1}\right)^\frac{\epsilon}{p}\\
&\leq  k_0+1 + C_n \mv^{\frac{2n+1}{n+1+2j} +\frac{2j}{n+1+2j}}\left( 1+ \mh^\frac{2n-2}{n+1+2j}\right) \ \ .
\end{flalign*}
The Proposition is now proved by making $j$ very large.
\end{proof}

We may now use standard methods to obtain a gradient estimate which is exponential in a height function $u$.

\begin{theorem}
Suppose $\Si$ satisfies Conditions \ref{cassump}, \ref{uassump} and \ref{finitearea}. Then there exist constants $C_1, C_2>0$ depending on $n, \Sigma$ and $M_0$ but independent of time such that for all the time the flow exists \label{Gradest}
\[v\leq  C_1e^{C_2 \text{osc}\, u}\underset{x\in M_0}\sup v\ \ .\]
\end{theorem}
\begin{proof}
We consider the function $f = ve^{\lambda u}$. Using Lemma \ref{evolu} , 
\[\ho e^{\lambda u }  =  e^{\lambda u} \left( -2\lambda g^{ij}\ov \n^2_{ij} u -2\l^2 |\n u |^2\right)
\]
We estimate
\begin{flalign*}
\frac{|\n v|^2}{v^2}
&\leq (1+\epsilon_1)A(\frac{V^\top}{v}, i)g^{ij}A(\frac{V^\top}{v}, j) + C_n(1+\frac{1}{\epsilon_1})v^2 
\end{flalign*}
therefore since $|V^\top|^2 \leq \sqrt{v^2-1}$, we may estimate as in \cite[Theorem 3.1]{BartnikEntire}
\begin{flalign*}
 |A|^2
&\geq(1+\frac{1}{n})\left(\frac{1}{1+\epsilon_1}\frac{|\n v|^2}{v^2} - \frac{C}{\epsilon_1}v^2 \right) - H^2
\end{flalign*}
We use these inequalities and Lemma \ref{evolv} to obtain that for $\e_1$ and $\e_2$ small, 
\begin{flalign*}
\ho f &= ve^{\l u}\left( -|A|^2 +2v^{-1}g^{ij} A(\n_iV, j) +v^{-1}g^{ij}\ip{\ov{\n}_{ij} V}{\nu}\right.\\
&\qquad\qquad\qquad\qquad\qquad\left.-2\lambda \ip{\frac{\n v}{v}}{\n u} -2\lambda g^{ij}\ov \n^2_{ij} u -\l^2|\n u |^2\right)\\
&\leq  ve^{\l u}\left( -(1-\epsilon_2)|A|^2 +C_n\left(1+\l+ \frac{1}{\epsilon_2}\right)v^2 -2\lambda \ip{\frac{\n v}{v}}{\n u} -\l^2|\n u |^2\right)\\
&\leq  ve^{\l u}\left(-(1-\epsilon_2)\left(\frac{1 +\frac{1}{n}}{1+\epsilon_1}\right)\frac{|\n v|^2}{v^2}+H^2 + C_n\left(1+\l+ \frac{1}{\epsilon_1}+\frac{1}{\epsilon_2}\right)v^2\right.\\
&\left.\qquad\qquad -2\lambda \ip{\frac{\n v}{v}}{\n u}  -\l^2|\n u |^2\right)\\
&\leq  ve^{\l u}\left( H^2 + C_n\left(1+\l+ \frac{1}{\epsilon_1}+\frac{1}{\epsilon_2}\right)v^2 -\l^2(1 - \frac{1+\epsilon_1}{(1-\epsilon_2)(1+ \frac{1}{n})})|\n u |^2\right)\ \ .
\end{flalign*}
Choosing, for example, $\epsilon_1 = \frac{1}{4n}$ and $\epsilon_2$ so that $(1-\epsilon_2)(1+\frac{1}{n}) = 1+\frac{1}{2n}$, then using Proposition \ref{Hest} and Lemma \ref{evolu}, 
\begin{flalign*}
 \ho f&\leq  ve^{\l u}\left( \mv^{2p} + C_nv^2   + C_n \l v^2 -\frac{\l^2}{4n+2}|\n u |^2\right)\\
&\leq  ve^{\l u}\left(\mv^{2p} +C_nv^2   + C_n \l v^2 -\frac{\l^2\psi^{-2}}{4n+2}(\hat{v}^2 - 1)\right)\ \ .
\end{flalign*}
Therefore due to the uniform lower bound on $\psi$, and the equivalence of $v$ and $\hat{v}$, when $v>2\tilde{C}_V$ (where $\tilde{C}_V$ is the constant from Remark \ref{remarkablev}) we may choose $\lambda$ sufficiently large, to obtain on the interior of $M_t$
\[\ho f \leq ve^{\lambda u}\left(\mv^{2p} - v^2\right) \]
while meanwhile at the boundary, due to Condition \ref{cassump}, and Lemma \ref{boundv}
\[\n_\mu f \leq 0\ \ .\]

We now apply a maximum principle argument to remove the possibility of large increasing maxima of $f$ when $v>2\tilde{C}_V$.

At an increasing maximum $(p, s)$ of $f$, where $f(p,s) = \underset{(x,t)\in M^n \times[0,s]}\sup f(x,t)$,  then for $\mv(s) = \underset{(x,t)\in M^n \times[0,s]}\sup v(x,t)$ we have
\[0\leq \mv^{2p} - v^2 .\]
For $(x, t)\in M^n\times [0,T]$ write $m\leq e^{\lambda u}\leq M$, then for any $(q,r)\in M^n \times [0,s]$ such that $v(q,r)\geq \frac{\mv(s)}{2}$,
\[\frac{m}{2} \mv(s) \leq e^{\lambda u(q,r)}v(q,r) \leq (e^{\lambda u }v)(p,s) \leq M \mv^p(s)\ \ ,\]
therefore $\mv(s) \leq \left(2 \frac{M}{m}\right)^\frac{1}{1-p}$, and $f\leq \left(2 \frac{M}{m}\right)^\frac{1}{1-p} M$. Therefore an increasing interior maximum is bounded by exponents of $u$.

At the boundary if $v^2 \geq \mv^{2p}$ then we may apply the elliptic Hopf lemma (see for example \cite[Lemma 3.4, p34]{Gilbarg}) to disallow an increasing boundary maximum. Otherwise we obtain exactly the situation above. 

Therefore we have $f\leq \max\left\{ \underset{M_0} \sup \, ve^{\lambda u}, \  \left(2 \frac{M}{m}\right)^\frac{1}{1-p} M, \ \tilde{C}_VM \right\}$. We observe that adding a constant function to $u$ changes nothing above, and so without loss of generality we may assume that $m=1$. The estimate on $f$ implies the theorem.
\end{proof}

\begin{cor}\label{boundedflow}
Suppose $\Si$ satisfies Conditions \ref{cassump} and \ref{uassump} and there exists comparison solutions such that $\underline{C_u}\leq u\leq \ov{C_u}$. Then a smooth solution to equation (\ref{MCF}) exists for $T=\infty$ for which for all $\epsilon>0$, we have the uniform estimate \[\underset{M^n\times [\e, \infty)}\sup |\n^k A|\leq C_k(\e)\ \ .\] 
\end{cor}
\begin{proof}
The gradient estimate, Theorem \ref{Gradest} shows that equation (\ref{graphMCF}) is a uniformly parabolic quasilinear equation with with a linear boundary condition. Therefore by applying standard quasilinear parabolic theory, see for example \cite{Lieberman}, we have existence of a smooth solution for $T=\infty$. The uniform parabolic norms estimate follows from the fact that we have a uniform estimate on the gradient and height. 
\end{proof}

\begin{cor}\label{unboundedflow}
Suppose $\Si$ satisfies Conditions \ref{cassump} and \ref{finitearea}. Then any for any smooth compatible initial data, a solution to (\ref{MCF}) either exists for $T=\infty$, or $u$ becomes unbounded in finite time. 
\end{cor}
\begin{proof}
 As Condition \ref{finitearea} holds for all time $u$ is bounded, the Corollary follows from Theorem \ref{Gradest}.
\end{proof}

\section{Convergence and stability}\label{CT}
We now look into questions of convergence when $\mb{F}$ stays in a bounded region.
\begin{lemma}
 If $\Sigma$ is as in Corollary \ref{boundedflow}, then there exists a sequence of times $t_k\rightarrow \infty$ such that $M_{t_k}$ tends towards $M_\infty$ in the $C^\infty$ topology where $M_\infty$ is a maximal surface satisfying the boundary condition.
\end{lemma}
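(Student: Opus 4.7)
The plan is the standard monotonicity-plus-compactness scheme, adapted to the Lorentzian sign convention. First I would apply Lemma~\ref{dVoldt} with $f \equiv 1$ to obtain
\[
\ddt{}\text{Vol}(M_t) = \int_{M_t} H^2\,dV \geq 0,
\]
so the spacelike volume is monotone non-decreasing along the flow. By Condition~\ref{finitearea} it is bounded above by $C_{\text{vol}}$, so $\text{Vol}(M_t)$ converges as $t \to \infty$ and consequently $\int_0^\infty \int_{M_t} H^2\,dV\,dt < \infty$. In particular there is a sequence $t_k \to \infty$ along which $\int_{M_{t_k}} H^2\,dV \to 0$.

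Next I would invoke Corollary~\ref{boundedflow}: in the graph formulation from Section~\ref{RP}, all derivatives of $\phi(\cdot, t)$ are uniformly bounded on $\Omega$ for $t \geq 1$. The family $\{\phi(\cdot, t_k)\}$ is therefore precompact in $C^\infty(\overline\Omega)$, and by a diagonal Arzel\`a--Ascoli argument I may extract a subsequence (still denoted $t_k$) such that $\phi(\cdot, t_k) \to \phi_\infty$ in $C^\infty(\overline\Omega)$. Setting $M_\infty$ to be the graph of $\phi_\infty$ over $\ov F$, this gives smooth convergence $M_{t_k} \to M_\infty$, with $M_\infty$ spacelike (since $\hat v$ has a uniform upper bound by Theorem~\ref{Gradest}).

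Finally I would identify the limit. Since $H$ and $dV$ are continuous in $C^2$-convergence of graphs, $\int_{M_\infty} H^2\,dV = \lim_k \int_{M_{t_k}} H^2\,dV = 0$, whence $H \equiv 0$ on $M_\infty$, i.e.\ $M_\infty$ is a maximal hypersurface. The Neumann boundary condition $\gamma^i D_i \phi_\infty = 0$ on $\partial\Omega$ passes to the limit from $\gamma^i D_i \phi(\cdot, t_k) = 0$, and $\phi_\infty(\partial\Omega) \subset \ov F^{-1}(\Sigma)$ is preserved by uniform convergence, so $M_\infty$ meets $\Sigma$ perpendicularly.

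The one subtlety worth flagging is the \emph{sign} of the volume monotonicity: in Minkowski space MCF increases rather than decreases the area functional, so volume monotonicity alone would not yield integrability of $\int H^2\,dV$ in time. It is precisely Condition~\ref{finitearea} that supplies the one-sided bound needed to close the argument. Beyond this, no serious obstacle remains, since Corollary~\ref{boundedflow} has already done the hard analytic work by furnishing uniform $C^\infty$ bounds on $\phi$.
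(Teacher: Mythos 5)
Your proposal is correct and is exactly the argument the paper relies on: the paper's proof is a one-line citation to Ecker--Huisken, whose Theorem 4.2 proceeds precisely by the volume monotonicity $\ddt{}\int_{M_t}dV=\int_{M_t}H^2\,dV\geq 0$ combined with the upper volume bound to get $\int_0^\infty\int_{M_t}H^2\,dV\,dt<\infty$, then extracting a subsequence with $\int_{M_{t_k}}H^2\,dV\to 0$ and using the uniform derivative estimates of Corollary \ref{boundedflow} for $C^\infty$ compactness. Your closing remark about the sign of the monotonicity (and hence the role of the volume upper bound rather than a lower bound) is the right observation about where the Lorentzian setting differs from the Euclidean one.
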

\begin{proof}
This is as in \cite[Proof of Theorem 4.2]{EckerHuiskenCMCMink}. 
\end{proof}

Convergence of the whole flow is not so straightforward and is related to stability of the maximal surfaces towards which the flow converges. This stability depends on the geometry of $\Sigma$ close to the maximal surface. To illustrate this we consider rotationally symmetric $\Sigma$.
\begin{lemma}\label{rot}
 Let $\Sigma\subset \bb{R}^3_1$ be a smooth rotationally symmetric boundary manifold, parametrised by $\mb{E}(z, \theta) = f(z)\mb{r} + z\mb{e_3}$ where $\mb{r}=\cos \theta\,\mb{e}_1+\sin \theta\,\mb{e}_2$ such that $f>0$ and $|f'(z)|<1$. Then $\Sigma$ satisfies Condition \ref{cassump} if and only if
\begin{equation}f''f\leq1-(f')^2\ \ .  \label{rotassump}\end{equation}
\end{lemma}
\begin{proof}
 We may calculate in these coordinates
\[\mu = \frac{1}{\sqrt{1-(f')^2}}(\mb{r}+ f'\mb{e}_3), \ \ h^\Sigma_{zz}=-\frac{f''}{\sqrt{1-(f')^2}}, \ \ h^\Sigma_{z\theta}=h^\Sigma_{\theta z}=0, \ \ h^\Sigma_{\theta\theta}=\frac{f}{\sqrt{1-(f')^2}} \ \ .\]
Therefore the principle directions are $V=\frac{f' \mb{r} +e_3}{\sqrt{1-(f')^2}}$ and $W= \mb{r}_\theta$ which gives
\[A^\Sigma(V,V)=\frac{-f''}{(1-(f')^2)^\frac{3}{2}} , \ \ A^\Sigma(W, W)=\frac{1}{f\sqrt{1-(f')^2}}\]
and Condition \ref{cassump} becomes equation (\ref{rotassump})
\end{proof}

We may obtain Conditions \ref{finitearea} and \ref{uassump} on such a rotational $\Sigma$ by, for example, assuming $f$, $f'$ and $f''$ are uniformly bounded and smooth.
\begin{example}
In the extreme case of the above, where $f''f=1-(f')^2$ everywhere, then we may integrate to get for arbitrary $A,B$
\[f(x)= \sqrt{A^2 + (z+B)^2}\]
or we obtain the pseudo-sphere in $\bb{R}^3_1$, i.e. the set of points $x\in\bb{R}^3_1$ such that $|x - Be_3|^2 = A^2$. We remark that in this case, comparison solutions constructed in Lemma \ref{moregravy} move off towards infinity, and so we do not necessarily expect convergence to a maximal surface. However, we are still able to apply Corollary \ref{unboundedflow} to obtain long time existence of the flow.
\end{example}
\begin{lemma}\label{gravy}
 If $\Sigma$ is as in Lemma \ref{rot} and satisfies (\ref{rotassump}), then $\hat \Si$ admits a foliation of constant mean curvature surfaces, where each leaf is a plane or a hyperbolic plane which satisfies the perpendicular boundary condition.
\end{lemma}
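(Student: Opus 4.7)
My plan is to exploit rotational symmetry to pass to the $(r,z)$ half-plane (with Lorentzian metric $dr^2 - dz^2$), construct the candidate CMC leaves explicitly there, and then show that they fit together as a foliation; it is this last step where condition (\ref{rotassump}) enters decisively.

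First, I would invoke the standard classification of rotationally symmetric spacelike CMC hypersurfaces in $\bb{R}^3_1$ which extend smoothly across the axis of rotation: such a hypersurface must be either a horizontal plane $\{z=\text{const}\}$ (mean curvature zero) or a vertically translated upper/lower branch of a hyperboloid $(z-a)^2-r^2=R^2$ (mean curvature $2/|R|$). For every height $z_0$ in the range of $\Si$, the Lorentzian perpendicularity condition at $(f(z_0),z_0)$ in the $(r,z)$-plane reduces to $u'(f(z_0))=f'(z_0)$ for the leaf graph $u$; combined with $u(f(z_0))=z_0$, this determines the leaf $L_{z_0}$ uniquely as the plane $z=z_0$ when $f'(z_0)=0$, and otherwise as an explicit hyperboloid with apex $a(z_0)$ on the axis and radius $R(z_0)$ given in closed form in terms of $f(z_0)$ and $f'(z_0)$.

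The main task is then to show that the family $\{L_{z_0}\}$ really foliates $\hat{\Si}$. Writing each leaf as a graph $z=u_{z_0}(r)$ on $[0,f(z_0)]$, it suffices to establish the strict monotonicity $\partial_{z_0}u_{z_0}(r)>0$ for all admissible $(z_0,r)$; then distinct leaves are disjoint and $(z_0,r)\mapsto(r,u_{z_0}(r))$ is a diffeomorphism onto the half-plane profile of $\hat{\Si}$, which rotates back to a foliation of $\hat{\Si}$. As a sanity check at the boundary, differentiating the identity $u_{z_0}(f(z_0))=z_0$ together with $u_{z_0}'(f(z_0))=f'(z_0)$ yields $\partial_{z_0}u_{z_0}|_{r=f(z_0)}=1-(f')^2>0$ for free from the spacelike condition $|f'|<1$, which is encouraging.

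The hard step is the corresponding interior bound, and this is precisely where condition (\ref{rotassump}) is used. Differentiating the explicit formula for $u_{z_0}$ (taking $f'(z_0)>0$; the other sign is symmetric and $f'=0$ joins continuously via the plane leaves) leads to an expression whose sign is controlled by two ingredients: a purely geometric inequality $\sqrt{R(z_0)^2+r^2}\leq f(z_0)/f'(z_0)$ for $r\leq f(z_0)$, which forces a particular coefficient to be non-positive, and condition (\ref{rotassump}) rewritten as $ff''\leq 1-(f')^2$, which after the sign flip produced by the first ingredient gives a manifestly positive lower bound. I expect the algebra to collapse to a clean estimate of the form
\[ (f')^{2}\,\partial_{z_0}u_{z_0}(r) \;\geq\; \big(1-(f')^{2}\big)\Big(1 - \sqrt{1-(f')^{2}}\Big) \;>\; 0, \]
closing the monotonicity and hence the lemma.
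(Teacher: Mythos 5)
Your proposal is correct and follows essentially the same route as the paper: the same family of planes and perpendicular hyperbolic planes (with the same radius $R=\frac{f}{f'}\sqrt{1-(f')^2}$ and apex $z-\frac{f}{f'}$), with condition (\ref{rotassump}) in the form $ff''\leq 1-(f')^2$ supplying exactly the monotonicity that prevents leaves from crossing. The only difference is that you verify $\partial_{z_0}u_{z_0}(r)>0$ for every $r$ in the leaf (and your claimed final bound does check out, degenerating to $\partial_{z_0}u_{z_0}\geq\tfrac12$ as $f'\to 0$, matching the paper's $g'\geq\tfrac12$), whereas the paper checks non-crossing only on the axis $r=0$ and appeals to rotational symmetry; your version is if anything the more complete verification of that step.
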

\begin{proof}
We aim to do this by constructing constant mean curvature foliation of planes and hyperbolic planes of $\hat{\Sigma}$. A general hyperbolic plane may be written $P(l,\theta)= R (\sinh l \mb{r} + \cosh l e_3) + Je_3$. We suppose that such a $P$ perpendicularly intersects a rotational surface $\Sigma$ at points $f(z)\mb{r}(\theta)+ze_3$ for some fixed $z$, that is $\cosh l \mb{r} + \sinh l \mb{e}_3=\mu(z)$. This gives that if $f'\neq 0$, 
\[R = \frac{f}{f'}\sqrt{1-(f')^2}, \ \ J=z-\frac{f}{f'}\]
and so we define
\[P(l,\theta, z)= \frac{f}{f'}\sqrt{1-(f')^2}(\sinh l \mb{r} + \cosh l e_3) + (z-\frac{f}{f'})e_3\ \ .\]
This represents a foliation if the leaves of the foliation do not cross, and since these are rotationally symmetric, this is equivalent to not crossing at $l=0$. Therefore we have a foliation if $\pard{g}{z}>0$ where 
\[g(z) = -\ip{P(0, \theta, z)}{e_3} = z - \frac{f}{f'}(1-\sqrt{1-(f')^2})\ \ .\]
We calculate
\begin{flalign*}
 g' &= \sqrt{1-(f')^2}\left[1-\frac{f''f}{ (1+\sqrt{1-(f')^2})(1-(f')^2)}  \right]\\
\end{flalign*}
From equation (\ref{rotassump}), we have
\[1-\frac{f''f}{ (1+\sqrt{1-(f')^2})(1-(f')^2)}\geq 1-\frac{1}{ (1+\sqrt{1-(f')^2})}>0\ \ .\] 
Therefore, we may always obtain a foliation of CMC surfaces if we have Condition \ref{cassump} and $f'>0$. When $f'\rightarrow 0$, $g'\geq \frac{1}{2}>0$, and so the leaves never cross. In this case the above parametrisation becomes degenerate, but the hyperbolic planes converge to a maximal plane.
\end{proof}

If $f'(c)=0$ at a point then there exists a planar maximal surface at height $c$ given by $\tilde{M}=\{(x, y, z)\in\hat{\Si}\subset \bb{R}^{3}_1| z=c\}$, satisfying equation (\ref{MCF}). 
\begin{definition}
 A solution to mean curvature flow $\mb{F}$ is said to be \emph{stable under the flow} if for any sufficiently small perturbation $\tilde{\mb{F}}_0$ (which still satisfies the compatibility condition) of the initial embedding $\mb{F}_0$, the perturbed flow will converge uniformly to $\mb{F}$ as $t\rightarrow \infty$. 
\end{definition}

We now look at stability of such surfaces:

\begin{lemma} \label{moregravy}
  If $\Sigma$ is as in Lemma \ref{rot} and satisfies (\ref{rotassump}), and suppose that $f'(c)=0$. Then there exist comparison solutions pushing solutions of (\ref{MCF}) away from planar maximal hypersurfaces at height $c$ satisfying $f''(c)>0$ and towards the planar maximal surfaces with $f''(c)<0$ at the boundary.   
\end{lemma}
\begin{proof}
We obtain comparison solutions from the foliations in Lemma \ref{gravy}. Take $\Omega$ to be the unit disk $D$, $(r, \theta)$ polar coordinates on $D$ and $P$ as in Lemma \ref{gravy}. Write $K(z)$ for the length of the radial geodesic from $P(0,0, z)$ to $\Sigma$ along the hyperbolic plane determined by $P(\cdot, \cdot,z)$. We define $\mb{G}:D\times[0,T)\ra \bb{R}^{n+1}_1$ by $\mb{G} = P(rK(z(t)), \theta, z(t))$ where $z(t):[0,T)\ra \bb{R}$ is to be determined. Locally, we choose the normal to the foliation so that on any hyperbolic plane $H>0$, and choose the parametrisation so that $R>0$, where $R$ is as in Lemma \ref{gravy}. We see that if $f'\neq0$, equation (\ref{CompSol}) is then equivalent to
\begin{equation}\dot{z}\left[\pard{R}{z} + \pard{J}{z}\cosh(Kr)\right]\leq \frac{2}{R(z)} \ \ .
\label{diffComp} 
\end{equation}
A solution to (\ref{diffComp}) will be a comparison solution to a solution of (\ref{MCF}) for which $M_0$ is on the side of $\mb{G}$ into which the normal points.

We have
\begin{flalign*}
 \pard{R}{z} + \pard{J}{z}\cosh(Kr)
 & = \sqrt{1-(f')^2} +\frac{\left(\sinh^2(Kr)-(f')^2\cosh^2(Kr)\right)ff''}{(f')^2\sqrt{1-(f')^2}(\sqrt{1-(f')^2}\cosh(Kr) + 1)}.
 \end{flalign*}

 We define $w(r):= \sinh^2(Kr)-(f')^2\cosh^2(Kr)$ and we may calculate as in Lemma \ref{gravy} that $\sinh(K) = \frac{f}{R} = \frac{f'}{\sqrt{1-(f')^2}}$. We therefore see that $g(0) = -(f')^2$, and $g(1) = 0$. Further simple calculations give that $g'\geq0$. We therefore see that $w(r)\in[-(f')^2, 0]$ for $r\in[0,1]$.
 
 If $f''\geq0$, we use (\ref{rotassump}) to obtain
 \begin{flalign*}
  \pard{R}{z} + \pard{J}{z}\cosh(Kr) &\geq \sqrt{1-(f')^2} -\frac{ff''}{\sqrt{1-(f')^2}(\sqrt{1-(f')^2}\cosh(Kr) + 1)}\\
 &\geq \sqrt{1-(f')^2}\left(1- \frac{1}{\sqrt{1-(f')^2}\cosh(Kr) + 1}\right)\\
 &>0
 \end{flalign*}

 If $f''<0$ then $\pard{R}{z} + \pard{J}{z}\cosh(Kr) \geq \sqrt{1-(f')^2}>0$. 

Equation (\ref{diffComp}) therefore yields an ordinary differential inequality in $z$ which may be solved to obtain comparison solutions moving in the $e_3$ direction. Observing that for leaves close to a maximal surface, the sign on  $H^G = 2R^{-1}$ of the foliation is determined by $f''$, the claimed stability and instability follow.
\end{proof}

\begin{figure}[h!]\label{CMCfoliation}
\includegraphics[scale=0.28]{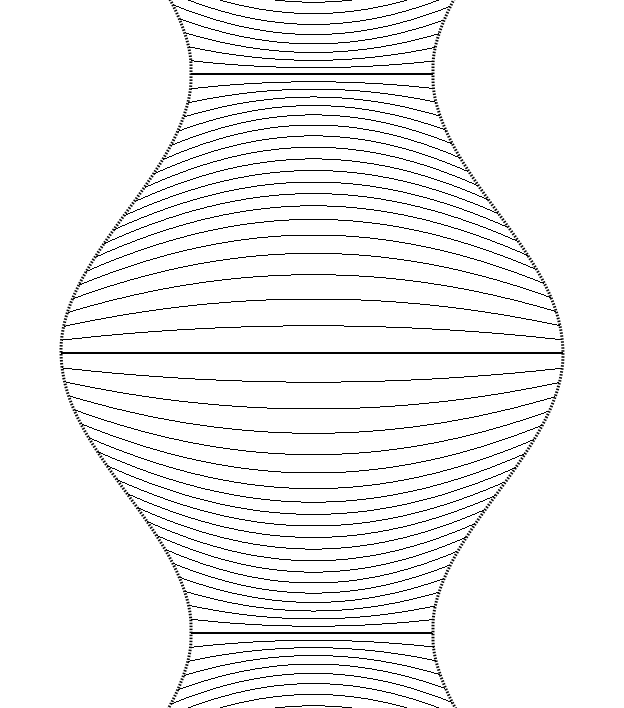}\includegraphics[scale=0.28]{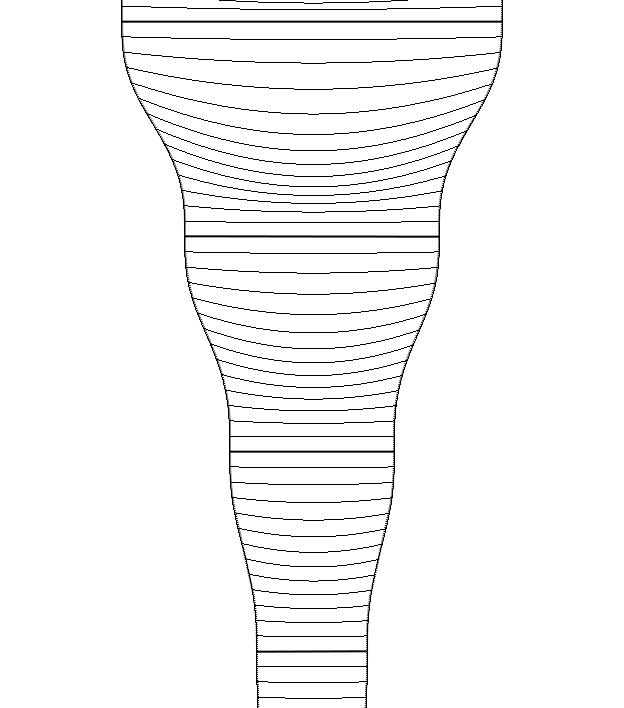}
 \caption{Two examples of foliations by CMC surfaces, demonstrating stability and instability of maximal planes.}
\end{figure}

In Figure \ref{CMCfoliation} we see three examples of possible stability behaviour of planar maximal surfaces. The left picture shows one completely stable plane at the widest point of the sine wave, and two unstable planes at the thinnest points.  We remark that since the plane is a maximal surface, and therefore a comparison solution Proposition \ref{Comparison} implies that MCF starting at a one-sided perturbation of one of the the unstable maximal surfaces will move away towards the stable maximal surfaces. The right hand picture shows examples with one sided stability -- perturbations on the lower side will flow back towards the maximal surface while flowing a one-sided upwards perturbation will move away towards a higher maximal surface.

It is also easy to see that despite the existence of a comparison solution moving away from the the unstable maximal surfaces in the left picture, there exist solutions to MCF which must intersect this maximal surface for all time. For example if we were to perturb by a two sided perturbation which is rotationally symmetric around the $y$-axis, the solution must always intersect the unstable plane due to preservation of symmetry by the flow. If there are no other maximal surfaces intersecting the plane, a subsequence of the flow must converge to the unstable maximal surface.

\begin{remark}
 Variational stability of a maximal surface \emph{does not} imply stability under the flow. We may observe this by taking a convex cylindrical $\Sigma$ and considering graphical MCF where, as in \cite{LambertConvex}, the flow then converges to planes given graphically by $u=\text{const}$. The condition for variational stability (where we assume perturbations also satisfy the boundary condition) becomes
\[2\int_{\partial M} \phi^2 A^\Sigma(\nu,\nu) dV^\partial + 2\int_M |\n \phi|^2+ \phi^2 |A|^2 dV\geq 0\]
for any function $\phi$ such that $\n_\mu \phi^2 = -2\phi^2 A^\Sigma(\nu,\nu)$, which is trivially true for constant graphs inside a convex cylinder, $\Si$. But from Proposition \ref{Comparison}  a one sided perturbation of such a maximal surface will converge to a \emph{different} maximal surface, and so we do not have stability under MCF. 

Stability of the flow \emph{does} imply variational stability. For sufficiently small variations of a maximal hypersurface which is stable under the flow, MCF will move the surface back to the maximal hypersurface. We therefore see that any small perturbation cannot have $H \equiv 0$ everywhere, and so by Lemma \ref{dVoldt} the volume of the flowing surface strictly increases under the flow, and the maximal surface is variationally stable. 
\end{remark}

We give a condition for stability under the flow.
\begin{lemma}\label{FullStab}
Suppose $\Si$ satisfies Condition \ref{uassump} and $\tilde{M}$ is a smooth compact uniformly spacelike maximal surface with boundary $\partial \tilde{M}$ and normal $\nu$, where $\partial \tilde{M}$ satisfies the boundary condition that $\partial \tilde{M}\subset \Si$ and $\ip{\nu}{\mu}=0$. Suppose there exists a $\tau>0$ and a $\phi:\tilde{M}\ra \bb{R}$, $\phi\geq 2\tau$ such that \label{Stab1}
 \begin{equation}\label{Stabcondition}
 \begin{cases}
 \Delta \phi - \phi |A|^2  <0\\
 \n_\mu \phi  \geq - \phi A^\Sigma(\nu,\nu)
 \end{cases}
 \end{equation}
 then $\tilde{M}$ is stable. 
\end{lemma}
\begin{proof}
 We construct a comparison solution from above, comparison solutions from below follow identically. 
Let $\tilde{F}:\Omega\ra \bb{R}^{n+1}_1$ parametrise $\tilde{M}$, and let $\ov \nu$ be a local extension of $\nu$ the normal of $\tilde{M}$ to an open neighbourhood of $\tilde{M}$ in $\bb{R}^{n+1}_1$ such that $|\ov \nu|^2=-1$, and for $p\in \Si$, $\ov \nu (p) \in R_p\Si$. Then define $J:\Omega\times(-\epsilon_1, \epsilon_1) \ra \bb{R}^{n+1}_1$ by the differential equation
\[\pard{J}{\l}(x,\l) = \ov \nu (\tilde{F}(x)+J(x,\l))\ \ , \qquad J(x,0)=0\ \ .\]
We see that $K:\Omega\times(-\epsilon_1, \epsilon_1) \ra \bb{R}^{n+1}_1$ defined by $K(x,\l)=\tilde{F}(x)+\phi(x)J(x,\l)$ is locally a diffeomorphism and $K(\partial \Omega)\subset \Si$.

We consider how geometric quantities vary on the hypersurfaces given by $K(\cdot, \l)$. Identically to the proof of Proposition \ref{evolnu} we calculate
 \[\pard{}{\l}|_{\l =0} g_{ij} = 2\phi h_{ij},  \ \ \pard{\nu}{\l}|_{\l=0} = \n \phi\ \ .\]
We also have
 \begin{flalign*}
  \pard{}{\l}|_{\l=0} h_{ij}& = -\pard{}{\l}|_{\l=0}\ip{\nu}{\frac{\partial^2(\tilde{F}+J)}{\partial x^i \partial x^j}}  = \n_{ij}^2 \phi +\phi h_i^kh_{kj}\ \ ,
 \end{flalign*}
and so
\[\pard{H}{\l}|_{\l=0} = -h^{ab}\pard{g_{ab}}{\l}|_{\l=0} + g^{ij}\pard{h_{ij}}{\l}|_{\l=0} = \Delta \phi -\phi |A|^2\ \ .\]
 
 At the boundary we see that 
 \[\pard{}{\l}\ip{\nu}{\mu}|_{\l =0} = \n_\mu \phi + A^\Sigma(\nu,\nu)\phi \ .\]
 
Due to the compactness of $M$ and (\ref{Stabcondition}) we see that  there is a $\delta>0$ such that $\pard{H}{\l}|_{\l=0} < -2\delta$. We also observe $\pard{}{\l}\ip{\nu}{\mu}|_{\l =0}\geq0$ and $\ip{\pard{J}{\l}}{\nu}|_{\l=0} = -\phi\leq -2\tau$. We use continuity of the above quantities to see that for $\l\in[0, 2\e_2)$, 
\[\underset{x\in\Omega}\sup H(x,\l)<-\l\delta, \qquad \underset{x\in\partial M}\inf\ip{\nu}{\mu}(x, \l)\geq 0, \qquad \underset{x\in\Omega}\sup\ip{\pard{J}{\l}}{\nu}(x, \l)\leq -\tau\ \ . \]

We now write $\mb{G}(x,t) = K(x, s(t))$, and bearing in mind that $\nu^G=-\nu$ we see that $\mb{G}$ satisfies (\ref{CompSol}) for $s(t)<2\e_2$ if:
\[\ip{\frac{d \mathbf{G}}{dt}}{\nu^G}=-\dot{s}\phi \ip{\pard{J}{\l}}{\nu} \geq H(x,s(t))=-H^G\]
This is implied by
\[\dot{s}\geq -\frac{\delta}{\tau^2}s(t)\]
Therefore there exists a very small $\theta>0$ such that $\mb{G}(x,t)=K(x, \e_2 e^{-\theta t})$ is an upper comparison solution which converges back to $M$ as $t\ra \infty$. 
 \end{proof}

\begin{cor} \label{Stabcondition2}
 If $\tilde{M}$ is as in the previous Lemma and also at every point $p\in \partial \tilde{M}$,  $A^\Sigma(\nu,\nu)|_p>0$, then $\tilde{M}$ is stable.
\end{cor}
\begin{proof}
 Pick a point $a\in \bb{R}^{n+1}_1$ and consider the function $f=R-|x-a|^2$ which we will show satisfies (\ref{Stabcondition}) for $R$ large enough. 
 
 We may easily see that $\ov{\n}^2_{ij} \phi = -2\ov{g}_{ij}$ and so, since $\tilde{M}$ is maximal $\Delta \phi= -2n$. Therefore, the first equation in (\ref{Stabcondition}) follows if $\phi$ is positive.
 
 At the boundary we have
 \[\n_\mu \phi = -2\ip{x-a}{\mu}\ \ .\]
 By compactness of $\partial \tilde{M}$, $A^\Sigma(\nu,\nu)>\delta$ for some $\delta>0$, and similarly (by uniform spacelikeness of $\tilde{M}$ and compactness of $\partial \tilde{M}$) $\ip{\mu}{x-a}$ is bounded above. Therefore there exists a $R_0>0$ such that for all $R\geq R_0$, \[2\ip{x-a}{\mu}\leq A^\Sigma(\nu,\nu)(R-|x-a|^2)\ \ .\]
 
Setting $R=\max\{R_0, \underset{M} \sup |x-a|^2 +\tau\}$ then (\ref{Stabcondition}) holds and so by Lemma \ref{FullStab} we are done. 
\end{proof}

%   Ben Lambert\\
%              University of Konstanz\\
%	      Zukunftskolleg\\
%	      Box 216\\
%	      78457 Konstanz\\
%	      Germany \\
%              \email{benjamin.lambert@uni-konstanz.de}}
\end{document}